\newtheorem{theorem}{Theorem}[section]
\newtheorem{proposition}{Proposition}[section]
\newtheorem{conjecture}{Conjecture}[section]
\theoremstyle{definition}
\numberwithin{equation}{section}
\newcommand{\ABC}{{abc}}
\newcommand{\XYZ} {{xyz}}
\newcommand{\sP}{{\mathcal P}}
\newcommand{\sF}{{\mathcal F}}
\newcommand{\sS}{{\mathcal S}}
\newcommand{\FF}{{\mathbb F}}
\newcommand{\QQ}{{\mathbb Q}}
\newcommand{\RR}{{\mathbb R}}
\newcommand{\fS}{{\mathfrak S}}
\newcommand{\fm}{{\mathfrak m}}
\newcommand{\fM}{{\mathfrak M}}
\begin{document}

\title[$xyz$ Conjecture]{Smooth Solutions to the $abc$ Equation:\\
The $xyz$ Conjecture}


\author{\sc Jeffrey C. LAGARIAS}
\address{Jeffrey C. Lagarias\\
University of Michigan\\
Department of Mathematics\\
530 Church Street\\
Ann Arbor, MI 48109-1043, USA}
\email{lagarias@umich.edu}

\author{\sc Kannan SOUNDARARAJAN}
\address{Kannan Soundararajan\\
Department of Mathematics\\
Stanford University\\
Department of Mathematics\\
Stanford, CA 94305-2025,USA}
\email{ksound@stanford.edu}

\maketitle


\begin{abstract}
This paper studies  integer
 solutions
 to the $\ABC$ equation $A+B+C=0$
in which none of $A, B, C$ have a large
prime factor. We set 
$H(A, B,C) = \max(|A|, |B|, |C|)$,
and consider primitive solutions (${\rm gcd}(A, B, C)=1$)
having  no prime factor
larger than 
$(\log H(A, B,C))^{\kappa}$, for  a given finite  $\kappa$.
We show that the $\ABC$ Conjecture implies
that for any fixed $\kappa < 1$ the equation has
only finitely many primitive solutions. 
We also discuss a conditional result,
showing that the  Generalized
Riemann hypothesis (GRH) implies 
that for any fixed $\kappa>8$  the $\ABC$ equation
has infinitely many primitive solutions.
 We outline  a proof of the latter  result.

\end{abstract}
\bigskip
%
%
%
%
\setlength{\baselineskip}{1.0\baselineskip}

\section{Introduction}

The $\ABC$ equation is the 
homogeneous linear ternary Diophantine
equation 
$A+B+C=0,$
usually  written $A+B= C$ (replacing $C$ with $-C$). 
An integer  solution $(A, B, C)$ is 
 {\em nondegenerate} if $ABC \ne 0$,
and  is {\em primitive} if ${\rm gcd}(A, B, C)=1$.
The Diophantine size of a solution can be measured by the 
{\em height} $H(A, B, C)$, given by
\begin{equation}\label{102a}
H(A, B, C) := \max ( |A|, |B|, |C|).
\end{equation}

The well-known $\ABC$ conjecture (of Masser and Oesterl\'{e} \cite{Oe88})
relates the height (\ref{102a})
of solutions to the {\it radical} $R(A,B,C)$ of a
solution, which  is  given by
\begin{equation}\label{102b}
R(A, B, C) = {\rm rad}(ABC) := \prod_{p | ABC} p.
\end{equation}

%
%

{\bf { $\ABC$} conjecture (weak form).}
{\em There is
a positive  constant $\kappa_1$ such that for any 
$\epsilon>0 $ there are only
finitely many primitive solutions $(A,B, C)$
to the $\ABC$ equation $A+B=C$ such that
\[
R(A, B, C)  \le  H(A, B, C)^{\kappa_1 - \epsilon}.
\]
}\\

The  restriction to primitive solutions is needed
to exclude infinite families of imprimitive
solutions  such as $2^n + 2^n = 2^{n+1}$,
which have $R(A, B, C)=2$ and $H(X, Y, Z)=2^{n+1}$.

For any individual
solution $(A, B, C)$ we define  its {\em  $\ABC$-exponent} $\kappa_1(A, B,C)$  by
\[
\kappa_1(A, B, C) := \frac{\log R(A, B, C)}{\log H(A, B, C)}.
\]
Then the  maximum allowable exponent in the $\ABC$ conjecture is given by
\[
\kappa_1 := \liminf_{{H(A,B, C) \to \infty}\atop{A+B=C, gcd(A, B, C)=1}} \kappa_1(A, B, C).
\]
It is known that this exponent satisfies  $\kappa_1 \le 1$, and the $\ABC$ conjecture is 
often stated in the following strong form, e.g. in Bombieri and Gubler \cite[Chap. 12]{BG06}.\\

%
%

{\bf { $\ABC$} conjecture (strong form).}
 {\em
The $\ABC$ conjecture holds with $\kappa_1=1$, so that for any $\epsilon>0$
there are only finitely many primitive solutions to $A+B=C$ satisfying
\[
R(A, B, C)  \le  H(A, B, C)^{1 - \epsilon}.
\]
}\\

%
%
%
%

\subsection{ ${\bf \XYZ}$ conjecture}

In this paper we relate the height to another measure of solution size,
the {\em smoothness} $S(A, B, C)$, given by
\begin{equation}\label{104}
S(A, B, C) := \max\{p:~  p|ABC  \}.
\end{equation}
We study the existence of primitive solutions
having  minimal smoothness  as a function of the height $H(A,B,C)$.
For convenience we switch variables from $(A, B, C)$
to $(X, Y, Z)$ and  formulate
the following conjecture.\\

%
%
{\bf $\XYZ$ conjecture (weak form-1).}
{\em
 There exists a
positive constant $\kappa_0$ such that the following
hold.

 (a) For
each  $\epsilon >0$ there are only finitely many 
primitive solutions $(X,Y,Z)$ to  
the equation $X+Y=Z$   with
\[
S(X, Y, Z) < (\log H(X, Y, Z))^{\kappa_0 - \epsilon}.
\]

 (b)  For
each $\epsilon >0$ there are infinitely many 
primitive solutions $(X,Y,Z)$ to   to 
the equation $X+Y=Z$ with
\[
S(X, Y, Z) < (\log H(X, Y, Z))^{\kappa_0 + \epsilon}.
\]
}\\

The restriction to primitive solutions is needed, 
to exclude the same infinite family  $2^n+2^n=2^{n+1}$, which has  
$S(X, Y, Z)=2$ and $H(X, Y, Z)=2^{n+1}.$

For any solution $(X, Y, Z)$ 
we define its {\em smoothness exponent} $\kappa_0(X, Y, Z)$ to be 
\begin{equation}\label{109d}
\kappa_0(X, Y, Z) := \frac{\log S(X,Y, Z)} {\log \log H(X, Y, Z)}.
\end{equation}
Define the {\em  $\XYZ$-smoothness exponent}
\begin{equation}\label{109e} 
\kappa_0 := \liminf_{{H(X, Y, Z) \to \infty}\atop{X+Y=Z, gcd(X, Y, Z)=1}} \kappa_0(X, Y, Z).
\end{equation}
{\sl A priori} this exponent exists and
 satisfies $0 \le \kappa_0 \le + \infty $.  \\
 
%
%

{\bf $\XYZ$ conjecture (weak form-2).}
{\em The $\XYZ$-smoothness exponent $\kappa_0$ is
positive and finite.}\\

To establish the truth of the weak form of the $\XYZ$ conjecture,
it  suffices to show that property $(a)$ holds for some $\kappa_0 >0$
and that property $(b)$ holds for some $\kappa_0 < \infty$. 
The fact that 
properties $(a)$ and $(b)$ are mutually exclusive, 
and are  monotone in $\kappa$ in  appropriate direction, then shows
a unique constant $\kappa_0$ exists satisfying both $(a)$ and $(b)$.
This makes it possible in principle to  prove 
that a nonzero constant $\kappa_0$ exists, without determining
its exact value. 

%
%
%
%
\subsection{Strong form of  $\XYZ$ conjecture}

There is a simple heuristic argument which supports
the $\XYZ$ -conjecture, and which 
 suggests  that the correct
constant might be $\kappa_0=\frac{3}{2}.$ 
Consider triples $(X,Y,Z)$ where $X$, $Y$ and $Z$ 
are pairwise relatively prime, all lie in the interval $[1,H]$ 
and are all composed of prime factors smaller than $(\log H)^{\kappa}$. 
For each such triple $X+Y-Z$ is an integer in the interval $[-H,2H]$ 
and if $X+Y-Z$ is ``randomly distributed'' then we may expect 
that roughly $1/H$ of the triples $(X,Y,Z)$ will 
satisfy $X+Y=Z$.  We must now count how many triples $(X,Y,Z)$ 
there are and  
expect to find a solution to $X+Y=Z$ when there are 
many more than $H$ such triples, e.g. $H^{1+\epsilon}$ triples,
We also  expect to find no solution when there are far fewer than $H$ such 
triples, i.e. $H^{1- \epsilon}$ triples. 

Let $\sS(y)$ denote the set of all integers having only prime factors $p \le y$,
Recall that 
\begin{equation}\label{121a}
\Psi(x,y) := \#\{n \le x: x \in \sS(y) \}
\end{equation}
counts the number of  such integers below $x$.
It is known that for fixed $\kappa>1$, one has
\begin{equation}\label{109a}
\Psi(x, (\log x)^{\kappa}) = x^{1 - \frac{1}{\kappa} +o(1)},  
\end{equation}
as $x \to \infty$ (see Tenenbaum \cite[Chap. III.5, Theorem 10]{Te95}, \cite[Lemma 9.4]{LS09}).
Thus for $\kappa >1$ the number of such triples $(X,Y,Z)$ with $X, Y, Z \in [1, H]$
is at most $\Psi(H,(\log H)^{\kappa})^3 = H^{3(1-\frac{1}{\kappa}+o(1))}$, 
and if $\kappa < \frac{3}{2}$ this is $< H^{1-\epsilon}$.  Thus 
heuristically we expect few hits in the relatively prime case, 
which suggests that $\kappa_0 \ge \frac 32$.

We now give a lower bound for the number of triples $(X, Y, Z)$. 
Take $X \in [1,H]$ to be a number composed of exactly $K:=[\log H/(\kappa \log \log H)]$ 
distinct primes all below $(\log H)^{\kappa}$.  Using Stirling's formula 
there are \\
${\binom{\pi((\log H)^{\kappa})}{K} }= H^{1-1/\kappa+o(1)}$ 
such values of $X$ all lying below $H$. Given $X$,  choose 
$Y \in [1, H]$ to be any number composed of exactly $K$ distinct primes 
below $(\log H)^{\kappa}$,  but avoiding the primes dividing $X$.  
There are ${\binom{\pi((\log H)^{\kappa})-K}{K}} = H^{1-1/\kappa+o(1)}$ 
such values of $Y$.  Finally choose $Z \in [1, H]$ to be a number composed 
of exactly $K$ distinct primes below $(\log H)^{\kappa}$ avoiding 
the primes dividing $X$ and $Y$.  There are 
${\binom{\pi((\log H)^{\kappa})-2K}{K}} = H^{1-1/\kappa+o(1)}$ 
such values of $Z$.  We conclude therefore that 
there are at least $H^{3(1-1/\kappa+o(1))}$ desired triples, 
and hence we expect that $\kappa_0 \le \frac 32$.   

We therefore formulate the following conjecture.\\

%
%
{\bf $\XYZ$ conjecture (strong form).}
{\em
The $\XYZ$-smoothness exponent $\kappa_0= \frac{3}{2}.$
}\\

In analogy  with the $\ABC$ conjecture, it
  is convenient to  define the {\em $\XYZ$-quality}
 $Q^{\ast}(X, Y, Z)$ of an $\XYZ$ triple $(X, Y, Z)$ to be
\begin{equation}\label{129}
Q^{\ast}(X, Y, Z) := \frac{3}{2} \left(\frac{ \log\log H(X, Y, Z)}{\log S(X, Y, Z)}\right).
\end{equation}
Since
\[
Q^{\ast}(X, Y, Z) =  \frac{3}{2\kappa_0(X, Y, Z)},
\]
the  strong form of the $\XYZ$-conjecture asserts that
\begin{equation}
\label{129b}
\limsup_{H(X,Y,Z) \to \infty} Q^{\ast}(X, Y, Z) = 1.
\end{equation}
Thus triples $(X, Y, Z)$ with $\XYZ$-quality exceeding $1$ are exceptionally
good.  
Unlike the $\ABC$-conjecture case, we do not know if there exist infinitely many relatively prime triples
$(X, Y, Z)$ having quality exceeding $1$.

%
%
%
%
\subsection{Examples}

 \paragraph{\bf Example 1.}
 Take $(X, Y, Z) = (1, 2400, -2401)$. Then $Y= 2^5 \cdot 3 \cdot 5^2  , |Z|= 7^4$.
 Here the height $H(X, Y, Z) = 2401$ while the smoothness $S(X, Y, Z)=7$.
 We have
 $\log H(X, Y, Z) = 7.78364$ so this example has smoothness exponent
 $$
 \kappa_0(X, Y, Z) := \frac{ \log S(X, Y, Z)}{\log\log H(X, Y, Z)} \approx 0.94829
  $$
 This triple is exceptionally good, having $\XYZ$-quality
 $$
 Q^{\ast} (X, Y, Z):= \frac{3}{2} \left(  \frac{\log\log H(X, Y, Z)}{\log S(X, Y, Z)}
 \right)   \approx 1.58180.
 $$
 
 \paragraph{\bf Example 2.} de Weger \cite[Theorem 5.4]{deW87} found
 the complete set of 
 primitive solutions  to the $xyz$ equation having $S(X, Y, Z) \le 13$;  there are 545  such solutions.  
 His table of large solutions (\cite[Table IX]{deW87}) yields the 
 extremal values  given  in Table \ref{tab1}. His values include the  current record for smallest
 value of  smoothness exponent, which is $\kappa_0 \approx 0.91517$ for
 $(1, 4374, -4375).$\\

\begin{table}
\begin{center}
\begin{tabular}{|c||r|r|r||c|c|}
\hline
\rule[-0.07in]{0cm}{.23in}
S(X,Y,Z) & X & Y & Z & $\kappa_0(X, Y, Z)$ &   $Q^{\ast}(X,Y, Z)$\\
\hline
    3 &  1& 8 & 9 & 1.39560& 1.07480 \\
   5 & 3 & 125 & 128 & 1.01902& 1.47200 \\
  7 & 1 & 4374 & 4375 &   0.91517  & 1.63904\\
  11 & 3584 & 14641 & 18225 & 1.05011 & 1.42841 \\
  13 & 91 & 1771470 & 1771561 &  0.96197 & 1.55930\\
  \hline
\end{tabular}
\end{center}
\caption{Extremal solutions  having $S(X,Y, Z) \le 13$}
\label{tab1}
\end{table}

 \paragraph{\bf Example 3.} 
 Consider the elliptic modular function
  $$
  j(\tau) = \frac{1}{q} = 744 + 196884 q + 21493760 q^2 + ...
  $$
  where $q= e^{2 \pi i \tau}$ with $Im(\tau)>0$ so $|q|<1$. A {\em singular modulus} is
  a value $j (\tau)$ where $\tau$ is an algebraic integer
  in an imaginary quadratic number field $K=\QQ(\sqrt{-d})$; each such $\tau$ corresponds
  to an elliptic curve with complex multiplication by an order in the field $K$.
  The theory of complex multiplication then says that $j(\tau)$ 
  is then an algebraic integer lying in an abelian extension of $K$.
  Gross and Zagier \cite[Theorem 1.3] {GZ85} 
 observe that the norms of differences of singular moduli 
  factorize into products of small primes. 
 Thus, we may consider equations
  \begin{equation}\label{108a}
 \left(j(\tau_1) - j(\tau_2)\right) +  \left(j(\tau_2) - j(\tau_3)\right) + 
 \left(j(\tau_3) - j(\tau_1)\right)=0
 \end{equation}
as giving candidate  triples $(X, Y, Z)$ of algebraic integers $X+Y+Z=0$ that
are smooth in their generating number field $K= \QQ(X,Y, Z)$, in the sense that
the prime ideals dividing $XYZ$ all have small norms.
(These triples were noted in Granville and Stark \cite[Sec. 4.2]{GS00} in
connection with the $\ABC$ conjecture.)
In special cases where all the values $j(\tau_i)$
are rational integers, which occur for example when
each of $\{\tau_k: 1 \le k \le 3\}$  correspond to elliptic curves with CM by an 
 order of  an imaginary quadratic field having class number one,
 then  (\ref{108a})
gives interesting examples of  integer triples with
small smoothness exponents.  
 Using (\cite[p. 193]{GZ85}) we have $j(\frac{-1 + i \sqrt{3}}{2}) =0$, and 
 \[
  j\Big(\frac{1+ i \sqrt{67}}{2}\Big) - j\Big(\frac{-1 + i \sqrt{3}}{2}\Big)= -2^{15}\cdot 3^3 \cdot 5^3 \cdot 11^3
 \]
 \[
~~~~j\Big(\frac{-1 + i \sqrt{3}}{2}\Big)-  j\Big(\frac{1+ i \sqrt{163}}{2}\Big)  = ~2^{18} \cdot 3^3 \cdot 5^3 \cdot 23^3 \cdot 29^3
 \]
 \[
~~~~~~~~~~~~~~~~ j\Big(\frac{1+ i \sqrt{163}}{2}\Big)- j\Big(\frac{1+ i \sqrt{67}}{2}\Big) = -2^{15} \cdot 3^7 \cdot 5^3 \cdot 7^2 \cdot 13
 \cdot 139 \cdot 331.
 \]
  After removing common factors we obtain the relatively prime triple
 \[
   ( -11^3, ~2^3 \cdot 23^3 \cdot 29^3, -3^4 \cdot 7^2 \cdot 13 \cdot 139 \cdot 331)
 =( -1331, ~2 373 927 704, ~-2 373 926 373).
 \]
 This has height $H=2 373 927 704$ and smoothness $S=331$, whence
 \[
  \kappa_0(X, Y, Z) := \frac{ \log S(X, Y, Z)}{\log\log H(X, Y, Z)} \approx 1.88863.
 \]
 Its $\XYZ$-quality $Q^{\ast}(X, Y, Z) \approx 0.79422$.
Here the radical 
$R(X, Y, Z) = 2 \cdot 3 \cdot 7 \cdot 11 \cdot 13 \cdot 23 \cdot 29 \cdot 139 \cdot 331=
184 312 146 018$, which is larger than $H(X, Y,Z)$.\\

\paragraph{\bf Example 4.} Consider the Diophantine equation $x^2+y^3= z^7$.
 Recently Poonen, Schaefer and Stoll \cite{PSS07} proved
 that, up to signs,  it has exactly eight different primitive integer
  solutions, and that
 the  largest of these is $(x, y, z)= (15312283, 9262, 113)$.
 Here $15312283=7\cdot 53\cdot 149\cdot 277$ and $9262= 2 \cdot 11 \cdot 421$.
 Thus for  $(X, Y, Z)= (x^2, y^3, z^7)$ we have
  $H= (113)^7$ and $S=421$, whence
 $\kappa_0(X, Y, Z) \approx 1.72682$ and
 $Q^{\ast}(X, Y, Z) \approx 0.86864$.\\

\paragraph{\bf Example 5.} The Monster simple group has order
$M= 2^{45} \cdot 3^{20} \cdot {5^9} \cdot 7^{6} \cdot 11^2 \cdot 13^3 \cdot 17 \cdot 19 \cdot 23 \cdot
29 \cdot 31 \cdot 41 \cdot 47 \cdot 59 \cdot 71.$
Its three smallest irreducible representations are the
identity representation (of degree $1$) plus representations
of degrees $196883= 47 \cdot 59 \cdot 71$ and
$21296876= 2^2 \cdot 31 \cdot 41 \cdot 59 \cdot 71$.
Here we note the triple $(X, Y, Z) =(196882, 1, 196883)$  has  
 $196882= 2 \cdot 7^4 \cdot 41,$ and $S(X, Y, Z)=71$;
additionally the triple $(21296875, 1, 21296876)$ has $21296875=5^6 \cdot 29 \cdot 47,$
and $S(X, Y, Z)=71$. Both these triples have all three terms dividing $M$. 
They have  smoothness exponents 
$\kappa_0(196882, 1, 196883) \approx 1.70463$ 
and $\kappa_0(21296875, 1, 21296876) \approx 1.50849,$ respectively.

%
%
%
%

\subsection{Main results}

The main results presented here are
 conditional results, concerning  both parts of the $\XYZ$ conjecture. 

In \S2 we  show  that a nonzero 
lower bound on $\kappa_0$ follows from the $\ABC$ conjecture. 
Namely, we show the weak form of the $\ABC$ conjecture implies the lower bound $\kappa_0 \ge 
\kappa_1$.

%
%
\begin{theorem}~\label{th11}
If the  weak form of the $\ABC$-conjecture holds with exponent $\kappa_1$, then for each 
$\epsilon > 0$ there are only finitely many
primitive solutions $(X,Y,Z)$ to $X+Y=Z$ such that
\[
S(X, Y, Z) < \left( \log H(X,Y, Z)\right)^{\kappa_1 - \epsilon}.
\]
Thus the $\XYZ$-smoothness exponent $\kappa_0$ satisfies
$\kappa_0 \ge \kappa_1.$ 
In particular, the strong form of the $\ABC$ conjecture
implies
\[
\kappa_0 \ge 1.
\]
\end{theorem}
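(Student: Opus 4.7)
The plan is to translate the smoothness hypothesis into a bound on the radical $R(X,Y,Z)$, and then invoke the weak $\ABC$-conjecture. The point is that requiring $S(X,Y,Z) < (\log H)^{\kappa_1 - \epsilon}$ is vastly more restrictive than the $\ABC$-type hypothesis $R(X,Y,Z) \le H^{\kappa_1 - \epsilon'}$, so the argument will have enormous slack.

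First I would observe that if every prime factor of $XYZ$ is at most $y := (\log H(X,Y,Z))^{\kappa_1 - \epsilon}$, then
\[
R(X,Y,Z) \;=\; \prod_{p \mid XYZ} p \;\le\; \prod_{p \le y} p \;=\; e^{\theta(y)},
\]
and Chebyshev's bound $\theta(y) = O(y)$ already suffices to conclude $R(X,Y,Z) \le \exp\bigl(O((\log H)^{\kappa_1 - \epsilon})\bigr)$. The paper records the known fact $\kappa_1 \le 1$, so $\kappa_1 - \epsilon < 1$ and therefore $(\log H)^{\kappa_1 - \epsilon} = o(\log H)$. It follows that $R(X,Y,Z) = H^{o(1)}$ as $H \to \infty$.

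Next I would apply the weak $\ABC$-conjecture with, say, $\epsilon' = \kappa_1/2$: it ensures that only finitely many primitive $\ABC$-triples satisfy $R \le H^{\kappa_1 - \epsilon'} = H^{\kappa_1/2}$. For $H$ larger than a threshold depending only on $\epsilon$, the previous bound forces any primitive smoothness-constrained triple into this finite set, which establishes the first claim of the theorem. The inequality $\kappa_0 \ge \kappa_1$ is then immediate from the definition of $\kappa_0$ as a liminf of $\kappa_0(X,Y,Z)$, and specializing to $\kappa_1 = 1$ yields the strong-form corollary $\kappa_0 \ge 1$.

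The main ``obstacle'' is essentially conceptual rather than technical: one must notice that the gap between $R = H^{o(1)}$ and $R = H^{\kappa_1 - \epsilon'}$ is so wide that no careful accounting of the Prime Number Theorem error term is required, and that the hypothesis $\kappa_1 \le 1$ enters only to keep the exponent on $\log H$ strictly below $1$. Once that matching is in place, the proof is short and purely formal.
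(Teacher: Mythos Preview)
Your argument is correct and follows essentially the same route as the paper: bound the radical by the primorial $\prod_{p\le S} p$, then invoke the weak $\ABC$-conjecture. The only difference is that the paper first proves the sharper intermediate statement (Theorem~\ref{th21}) that $S(X,Y,Z)\le (\kappa_1-\epsilon)\log H$ already has finitely many primitive solutions, using the Prime Number Theorem rather than Chebyshev, and then observes that $(\log H)^{\kappa_1-\epsilon}=o(\log H)$; your direct use of Chebyshev is more elementary and perfectly adequate for Theorem~\ref{th11} itself, though it would not recover the paper's stronger linear bound.
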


The simple deduction of Theorem~\ref{th11} is given in \S2.
We also give an unconditional result (Theorem~\ref{th22}),
namely that there are only finitely many primitive solutions to
\[
S(X, Y, Z) < (3- \epsilon) \log\log H(X, Y, Z).
\]

For the upper bound in the $\XYZ$ conjecture, we derive an upper bound
 in our paper \cite{LS09}, 
assuming appropriate Riemann hypotheses. 
We assume the Generalized Riemann hypothesis (GRH) in the following form:\\

%
%

\noindent{\bf Generalized Riemann Hypothesis ({\rm GRH}).} {\em The Riemann zeta function
and all Dirichlet $L$-functions 
have all of their zeros in the critical strip
$0 < Re(s) <1$ lying on the critical line $Re(s)= \frac{1}{2}.$ }\\

The main result of \cite{LS09}
is  that the GRH implies  the upper bound part of the $\XYZ$-conjecture, establishing that
$\kappa_0 \le 8$.

%
%
\begin{theorem}~\label{th12} 
If the Generalized Riemann Hypothesis (GRH) holds,  then for each 
$\epsilon > 0$ there are infinitely many primitive
solutions $(X,Y,Z)$ to $X+Y+Z=0$ such that
\[
S(X, Y, Z) < (\log H(X,Y, Z))^{8 + \epsilon}.
\]
In particular, the $\XYZ$-smoothness exponent satisfies $\kappa_0 \le 8$.
\end{theorem}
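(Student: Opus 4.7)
The plan is to count smooth triples directly by the circle method. Fix $\kappa > 8$, set $y = (\log H)^\kappa$, and define
\begin{equation*}
F(\alpha) := \sum_{\substack{n \le H \\ n \in \sS(y)}} e(n \alpha), \qquad N(H, y) := \int_0^1 F(\alpha)^2 \, F(-\alpha) \, d\alpha,
\end{equation*}
so that $N(H, y)$ is precisely the number of ordered triples $(X, Y, Z) \in (\sS(y) \cap [1, H])^3$ with $X + Y = Z$. Parseval gives $\int_0^1 |F(\alpha)|^2 \, d\alpha = \Psi(H, y) = H^{1 - 1/\kappa + o(1)}$, so on heuristic grounds one expects
\begin{equation*}
N(H, y) \asymp \Psi(H, y)^3 / H = H^{2 - 3/\kappa + o(1)},
\end{equation*}
which already tends to infinity whenever $\kappa > \tfrac{3}{2}$; the task is to validate this main term rigorously for $\kappa > 8$.

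I would then dissect the torus $[0,1]$ into major arcs $\fM$ (unions of short intervals around rationals $a/q$ with $q$ bounded by a small power of $H$) and minor arcs $\fm$. On $\fM$, $F(a/q + \beta)$ is analyzed by decomposing smooth integers $n \equiv a \pmod q$ via Dirichlet characters modulo $q$ and evaluating the resulting $L$-function contour integrals. GRH provides the essential input: it yields sharp equidistribution of smooth numbers in arithmetic progressions, $\Psi(x, y; q, a) = \Psi_q(x, y)/\varphi(q) + O(x^{1 - \eta})$, uniformly for $q$ up to a small positive power of $x$, via Perron inversion of the twisted Dirichlet series $\sum_{n \in \sS(y)} \chi(n) n^{-s}$ and a contour shift whose error is controlled by the GRH-based zero-free region. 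Integrating the resulting approximation over $\fM$ produces a main term of the shape $\fS \cdot \Psi(H, y)^3 / H$ with $\fS$ a positive, absolutely convergent singular series.

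The decisive step is the minor arc bound. Converting additive twists $e(n \alpha)$ with $\alpha$ near $a/q$ into multiplicative twists via Gauss sums, a pointwise bound $\sup_{\alpha \in \fm} |F(\alpha)| \ll \Psi(H, y)^{1 - \delta}$ follows from GRH-based estimates for the character sums $\sum_{n \le H,\, n \in \sS(y)} \chi(n)$ with non-principal $\chi$. Combined with Parseval this yields
\begin{equation*}
\int_{\fm} |F(\alpha)|^2 |F(\alpha)| \, d\alpha \ll \Psi(H, y)^{2 - \delta},
\end{equation*}
while the main term may be rewritten as $\Psi(H, y)^{(2\kappa - 3)/(\kappa - 1)}$. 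The minor arcs are negligible against the main term precisely when $\delta (\kappa - 1) > 1$, so a savings $\delta > 1/7$ in the character sum bound yields the threshold $\kappa > 8$.

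The reduction to primitive triples is routine: every smooth solution divides down to a primitive smooth solution of no greater height, and since $N(H, y)$ grows as a positive power of $H$ while only finitely many primitive triples lie below any fixed height, infinitely many distinct primitive triples emerge. The main obstacle throughout is the minor-arc estimate: smooth numbers lack the self-similar multiplicative structure that Vinogradov's method exploits for primes, so one is forced to rely on less efficient character-sum bounds. This is precisely what dictates the numerical threshold $\kappa > 8$ and prevents the method from reaching the conjectural optimum $\kappa_0 = \tfrac{3}{2}$.
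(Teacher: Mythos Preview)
Your approach is essentially the paper's: circle method plus GRH-based bounds on character sums over $y$-smooth numbers. A few points of execution differ and are worth flagging.

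First, the paper works with \emph{smoothly weighted} exponential sums $E(x,y;\alpha)=\sum_{n\in\sS(y)}\Phi(n/x)e(n\alpha)$ with $\Phi\in C_c^\infty(\RR_{>0})$, not sharp cutoffs. This is not cosmetic: in the regime $y=(\log x)^\kappa$ the major-arc evaluation relies on the Hildebrand--Tenenbaum saddle-point method, writing the local main term as a contour integral of $\zeta(s;y)H(s;q)x^s\check\Phi(s,\gamma x)$ and shifting to the saddle $c(x,y)\approx 1-1/\kappa$. The rapid decay of $\check\Phi$ is what makes the shift work; your reference to equidistribution $\Psi(x,y;q,a)=\Psi_q(x,y)/\phi(q)+O(x^{1-\eta})$ is right in spirit but does not by itself control the $\beta$-variation of $F(a/q+\beta)$ over the arc.

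Second, the minor-arc bound actually obtained is $|E(x,y;\alpha)|\ll x^{3/4+\epsilon}$, a fixed power of $x$: GRH gives $|L(s,\chi;y)|\ll (q|s|)^\epsilon$ for $\mathrm{Re}(s)\ge 1/2+\epsilon$, so the nonprincipal character sum is $\ll x^{1/2+\epsilon}$, and the Gauss-sum factor $\sqrt q\le x^{1/4}$ (Farey level $\sqrt x$) completes it. With Parseval the minor-arc contribution is $\ll x^{3/4+\epsilon}\Psi(x,y)$, and the threshold $\kappa>8$ is exactly $x^{7/4}=o(\Psi(x,y)^2)$. Your $\Psi^{1-\delta}$ framing is a repackaging of this, not a separate input.

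Third, the paper extracts primitive solutions by M\"obius inclusion-exclusion over the gcd, which replaces $\fS_f$ by a primitive singular series $\fS_f^*$ and yields a genuine lower bound for $N^*(H,\kappa)$. Your divide-down pigeonhole suffices for the qualitative statement of Theorem~\ref{th12}, but note it needs one extra step you did not write: you must argue that enough of the primitive triples produced have height comparable to $H$ (not tiny), so that $S\le(\log H)^\kappa$ implies $S<(\log H(X,Y,Z))^{8+\epsilon}$ in terms of the triple's \emph{own} height.
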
 

We discuss this result and its proof in \S3 and \S4.
Theorem~\ref{th12} is a consequence of  a  stronger result, stated  
as Theorem~\ref{th13} below, 
which gives a lower bound for  the number of weighted primitive 
solutions to the equation, for fixed $\kappa >8$, which is the correct
order of magnitude according to the heuristic argument above.  This result is proved by
a variant of the Hardy-Littlewood method, combined with
the Hildebrand-Tenenbaum saddle-point method for counting numbers
all of whose prime factors are small. The generalized Riemann hypothesis
is invoked  mainly to control the minor arcs estimates.  The Hardy-Littlewood
method detects all integer solutions, and an inclusion-exclusion argument
is needed at the end of the proof  to count primitive integer solutions only.

Combining Theorems \ref{th11} and \ref{th12} 
above yields the following result conditionally proving the $\XYZ$ conjecture.

%
\begin{theorem}~\label{th14}{\em (Alphabet Soup Theorem)}
 The weak form of the $\ABC$-conjecture  together with the $GRH$  implies the 
 weak form of the $\XYZ$ conjecture.
 \end{theorem}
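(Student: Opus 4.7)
The plan is to combine Theorems~\ref{th11} and~\ref{th12} directly through the liminf formulation~(\ref{109e}) of the $\XYZ$-smoothness exponent $\kappa_0$. The weak form of the $\ABC$ conjecture produces some positive constant $\kappa_1 > 0$ for which the conclusion of Theorem~\ref{th11} gives the lower bound $\kappa_0 \geq \kappa_1 > 0$. Independently, the GRH hypothesis is exactly what Theorem~\ref{th12} needs, and applying it yields $\kappa_0 \leq 8 < \infty$. Chaining the two inequalities gives $\kappa_1 \leq \kappa_0 \leq 8$, so in particular $0 < \kappa_0 < \infty$. This is precisely the weak form-2 of the $\XYZ$ conjecture.

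To pass from weak form-2 to weak form-1, I would invoke the monotonicity remark given after the statement of form-1: properties (a) and (b) are monotone in $\kappa$ in opposite directions, and at the critical exponent both must hold. More concretely, with $\kappa_0$ defined as the liminf in~(\ref{109e}), property (a) at exponent $\kappa_0$ is just a rewriting of the defining inequality, since only finitely many primitive triples can satisfy $\kappa_0(X,Y,Z) < \kappa_0 - \epsilon$, i.e.\ $S(X,Y,Z) < (\log H(X,Y,Z))^{\kappa_0 - \epsilon}$. Property (b) at the same $\kappa_0$ follows from the definition of liminf as an accumulation point: there must be infinitely many primitive triples with $\kappa_0(X,Y,Z) < \kappa_0 + \epsilon$. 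The positivity and finiteness of $\kappa_0$ established in the previous paragraph guarantee that both (a) and (b) are nontrivial assertions, which is the content of weak form-1.

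There is no real obstacle here; the deduction is essentially bookkeeping and all substantive content has been absorbed into Theorems~\ref{th11} and~\ref{th12}. The only conceptual point worth being careful about is the equivalence between the two weak forms via the liminf construction, which is why the monotonicity/mutual-exclusion remark from the introductory discussion must be cited. The genuine difficulties lie in the two ingredient theorems: Theorem~\ref{th11} uses the $\ABC$ hypothesis to force smooth solutions to be sparse, while Theorem~\ref{th12} uses GRH within the Hardy--Littlewood circle method combined with the Hildebrand--Tenenbaum saddle-point analysis of $\Psi(x,y)$ to produce the required abundance of smooth primitive solutions.
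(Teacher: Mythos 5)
Your proof is correct and matches the paper's intended deduction: the paper explicitly derives Theorem~\ref{th14} by combining Theorems~\ref{th11} and~\ref{th12}, exactly as you do, obtaining $0 < \kappa_1 \le \kappa_0 \le 8$ and hence the weak form-2 of the $\XYZ$ conjecture. Your extra care in unwinding the liminf definition~(\ref{109e}) to recover properties (a) and (b) of weak form-1 is accurate and consistent with the monotonicity remark the paper makes after stating that conjecture.
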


This conditional result does not establish the strong form of the $\XYZ$ conjecture, since it only shows
 $\kappa_1 \le \kappa_0 \le 8$.  Even if we assume
  the strong form of the $\ABC$ conjecture, we deduce only 
 $\kappa_0 \ge 1$ which is weaker than the lower bound of the strong $\XYZ$ conjecture.

%
%
%
%
\subsection{$S$-unit equations}

The $S$-unit equation is 
$X+Y=Z$,
subject to the constaint that all prime factors of $XYZ$ belong to
a fixed finite set of primes $S$. 
In 1988 Erd\H{o}s, Stewart and Tijdeman \cite{EST88} showed the existence
of collections of primes $S$ with $|S|=s$ such
that the $S$-unit equation $X+Y= Z$ has ``exponentially many''
solutions, namely at least 
$\exp\left((4- \epsilon) s^{\frac{1}{2}} (\log s)^{-\frac{1}{2}} \right)$
solutions, for $s \ge s_0(\epsilon)$ sufficiently large.
Recently Konyagin and Soundararajan \cite{KS07}
improved this construction, to show that there exist $S$ such
that the 
$S$-unit equation has at least
$\exp\left(s^{2- \sqrt{2}- \epsilon}\right)$ solutions.\\
In these constructions   the sets of primes  $S$ were tailored to have 
large numbers of solutions, which entailed losing control over
the relative sizes of primes in $S$.  
However it is natural to consider the special case where
$S$ is an inital segment of primes 
\begin{equation}\label{142}
S_y =\sP(y) :=\{ p: p ~\mbox{prime},~p \le y \}.
\end{equation}
Here
Erd\H{o}s, Stewart and Tijdeman conjectured (\cite[p. 49, top]{EST88})
a stronger exponential bound, which asserts that  
 for $s= |S_y|$ and each $\epsilon>0$
there should be at least $\exp( s^{\frac{2}{3} - \epsilon})$
$S$-unit solutions to $X+Y=Z$ and at most $\exp( s^{\frac{2}{3} + \epsilon})$
such solutions, for all $s > s_0(\epsilon).$ \\

The $xyz$ conjecture can be reformulated as an assertion about the
maximal relatively prime solutions to an $S$-unit equation for $S= S_y$,
as $y \to \infty$.\\

%
%

{\bf $\XYZ$ conjecture (weak form-3).}
{\em Take $S=S_y$ 
and  let
$H_y$ denote the maximal height of any relatively prime
solution to this $S$-unit equation.
Then the constant
\begin{equation}\label{148}
\kappa_0 :=\liminf_{y \to \infty}~ \frac{ \log y}{\log\log H_y}.
\end{equation}
is positive and finite. 
}\\

It is known that the complete set
of solutions to an $S$-unit equation for fixed $S$ can be determined effectively,
and therefore $H_y$ is effectively computable in principle
(but only for small $y$ in practice).

The conjecture of Erd\H{o}s, Stewart and Tijdeman for the
number of solutions to the $S$-unit equation for $S=S_y$
would imply  that $\kappa_0 \ge \frac{3}{2}.$ 
In fact  the exponent $\frac{2}{3}$ conjectured by Erd\H{o}s, Stewart and  Tijdeman \cite{EST88} 
is supported by a similar  heuristic to that giving
$\kappa_0= \frac{3}{2}$ in the strong form of the $\XYZ$ conjecture.

We obtain the following result in 
the direction of the conjecture of Erd\H{o}s, Stewart and Tijdeman,
as an easy
consequence of the quantitative bound given in 
Theorem~\ref{th13}  later in this paper.
%
\begin{theorem}~\label{th15}
Let $S= S_s^{\ast}$ denote the set of the first $s$ primes, and
let $N(S)$ count the number of 
primitive  solutions  $(X,Y, Z)$ 
to the $S$-unit equation $X+Y=Z.$
If the Generalized Riemann Hypothesis
 is valid, then for each $\epsilon>0$, 
\[
N(S_s^{\ast}) \gg_{\epsilon}  \exp(s^{\frac{1}{8}-\epsilon}).
\]
\end{theorem}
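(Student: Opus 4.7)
The plan is to deduce Theorem~\ref{th15} from the quantitative version Theorem~\ref{th13} of the GRH-conditional Theorem~\ref{th12}, in which (as the excerpt indicates) one proves that for each fixed $\kappa > 8$ the number of primitive integer solutions $(X,Y,Z)$ to $X+Y=Z$ with $H(X,Y,Z) \le H$ and $S(X,Y,Z) < (\log H)^{\kappa}$ is at least $H^{2-3/\kappa-o(1)}$, matching the heuristic count $H^{3(1-1/\kappa)+o(1)}/H$ derived in Subsection~1.2.

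Given $\epsilon > 0$, I would first choose $\delta > 0$ small enough that $1/(8+\delta) > 1/8 - \epsilon$, and set $\kappa = 8 + \delta$. For each $s$, write $y = p_s$ for the $s$-th prime, so that $y = s \log s\,(1+o(1))$ by the prime number theorem, and pick
\[
H = H(s) := \exp\bigl(y^{1/\kappa}\bigr),
\]
so that $(\log H)^{\kappa} = y = p_s$. Then any primitive solution $(X,Y,Z)$ of $X+Y=Z$ with $H(X,Y,Z) \le H$ and $S(X,Y,Z) < (\log H)^{\kappa}$ has every prime factor of $XYZ$ strictly less than $p_s$, hence lying in $S_s^{\ast}$, and is therefore counted by $N(S_s^{\ast})$.

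Applying Theorem~\ref{th13} to these parameters yields
\[
N(S_s^{\ast}) \;\gg\; H^{2 - 3/\kappa - o(1)} \;=\; \exp\!\bigl((2-3/\kappa - o(1))\,p_s^{1/\kappa}\bigr).
\]
Since $2 - 3/\kappa \ge 2 - 3/8 = 13/8 > 0$ and $p_s^{1/\kappa} = (s \log s)^{1/(8+\delta)}(1+o(1)) \ge s^{1/8 - \epsilon}$ for all sufficiently large $s$ (by the choice of $\delta$), the right-hand side dominates $\exp(s^{1/8 - \epsilon})$, which gives the stated bound.

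The main (and essentially only) obstacle is that one must invoke Theorem~\ref{th13} in its quantitative form: the qualitative ``infinitely many'' assertion of Theorem~\ref{th12} does not by itself exhibit a family of solutions of controlled size that is polynomially large in $H$. The substantive work is therefore the proof of Theorem~\ref{th13} itself, which (as sketched in Sections 3--4 and carried out in \cite{LS09}) combines the Hardy--Littlewood circle method with the Hildebrand--Tenenbaum saddle-point method for counting smooth numbers and uses GRH to control the minor-arc contribution; granting that, Theorem~\ref{th15} reduces to the parameter calibration of $H$ against $y = p_s$ performed above.
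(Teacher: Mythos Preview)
Your proposal is correct and matches the paper's own approach: the paper explicitly says this result is ``an easy consequence of the quantitative bound given in Theorem~\ref{th13}'' and defers the actual proof to \cite{LS09}, and your calibration $H=\exp(p_s^{1/\kappa})$ with $\kappa=8+\delta$ is exactly the intended deduction. The only cosmetic slip is that $N^{\ast}(H,\kappa)$ is defined with $S(X,Y,Z)\le(\log H)^{\kappa}$ rather than strict inequality, but this works in your favor since $(\log H)^{\kappa}=p_s\in S_s^{\ast}$.
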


This is proved as Theorem 1.3 in \cite{LS09}.

%
%
%
%
\subsection{Discussion}~\label{sec15}

We  remark first on  earlier  work related to the approach used in Theorem~\ref{th12}.
The Hardy-Littlewood method was used 
in 1984 by Balog and Sarkozy \cite{BS84a}, \cite{BS84b}
to show that  the equation $X+Y+Z=N$ has solutions with 
 $S(X, Y, Z)  \le \exp( 3 \sqrt{\log N \log\log N})$ for all sufficiently large $N$.
Exponential sums taken over smooth numbers
below $x$ with all prime factors smaller than $y$
were  studied in detail by  de la Bret\`{e}che (\cite{dlB98}, \cite{dlB99})
and de la Bret\`{e}che and Tenenbaum (\cite{BT04}, \cite{BT05}, \cite{BT07}),
see also de la Bret\`{e}che and Granville \cite{dBG09}.
Their various bounds are unconditional, and are 
valid  for sufficiently large $y$, requiring at least  $y \ge  \exp (c (\log\log x)^2)$.
Thus their results fail to apply in  the range we need, which is 
$y= (\log x)^{\kappa}$ for fixed $\kappa$.

In the range of interest,   it is
a delicate problem even to count the number of $y$-smooth integers up to $x$.
General results on $\Psi(x,y)$  are reviewed in Hildebrand and Tenenbaum \cite{HT93}.
In \cite{LS09}
we  make use of the saddle-point method developed by Hildebrand and Tenenbaum \cite{HT86}
to estimate $\Psi(x, y)$. 
They showed that it  provides an asymptotic formulas for $\Psi(x,y)$
when $y$ is not too small, including a result of Hildebrand that for $y \ge \exp( (\log \log x)^{\frac{5}{3} + \epsilon})$,
\begin{equation}\label{121c}
\Psi(x, y) =  x\rho(u)\Big( 1+ O_{\epsilon}\Big( \frac{u\log(u+1)}{\log x}\Big) \Big).
\end{equation}
where $u$ is defined by $y= x^{\frac{1}{u}}$ and $\rho(u)$ is Dickman's function.
The range $y= (\log x)^{\kappa}$ for fixed $\kappa$  lies outside the 
range covered by Hildebrand's (\ref{121c}), and in  this range, the 
behavior of $\Psi(x,y)$ is known to be sensitive to the fine distribution of primes and
 location of the zeros of $\zeta(s)$.
In 1984 Hildebrand \cite{Hi84} showed that the Riemann hypothesis is equivalent
to the assertion that for each $\epsilon >0$ and $1 \le u \le y^{1/2 - \epsilon}$
there is a uniform estimate
\[
\Psi(x,y) =   x\rho(u)\exp( O_{\epsilon} ( y^{\epsilon} ) ).
\]
Moreover, assuming the Riemann hypothesis, he showed that 
for each $\epsilon >0$ and $1 \le u \le y^{1/2 - \epsilon}$
the stronger uniform estimate
\[
\Psi(x,y) = x \rho(u) \exp \Big( O_{\epsilon} \Big( \frac{\log (u+1)}{\log y} \Big) \Big)
\]
holds. 
On choosing $y = (\log x)^{\kappa}$ for $\kappa > 2$, this latter estimate yields 
\[
\Psi(x, (\log x)^{\kappa} ) \asymp x \rho(u),
\]
which  provides only an order of magnitude estimate for the size of $\Psi(x,y)$. 
Furthermore if
the Riemann hypothesis is false then $\Psi(x,y)$ must sometimes
exhibit large oscillations away from the value $x\rho(u)$ for some $(x,y)$ in
these ranges.  In 1986 Hildebrand \cite{Hi86} obtained further results 
indicating  that when
$y < (\log x)^{2- \epsilon}$ one should not expect  any smooth asymptotic
formula for $\Psi(x,y)$ in terms of the $y$-variable to hold.
We assume GRH, and because of these issues  we retain $\Psi(x,y)$ in our final formulas, avoiding any
asymptotic approximation to it (see Theorem~\ref{th52} and \ref{th121}). 
Note that the threshold value $\kappa=2$ is relevant in
the formula in  Theorem~\ref{th13},
see (\ref{335}) below.

Concerning the relative strength of the $\ABC$
conjecture versus the $\XYZ$ conjecture,  the difficulty of the $abc$ conjecture lies
entirely in the lower bound. Indeed it has the trivial upper bound $\kappa_1 \le 3$,
and also has the unconditional upper bound $\kappa_1 \le 1$.
The  $xyz$ conjecture presents
difficulties in both  the upper bound and the lower bound,
and we have no unconditional result for either bound.
Concerning the lower bound, we do
not know any implication from the $\XYZ$ conjecture in the direction
of the $\ABC $ conjecture. 
Thus establishing a nontrivial lower bound (a) of the 
$\XYZ$ conjecture could be an easier problem than the $\ABC$-conjecture. 
We note the coincidence that the  $\ABC$ implication that $\kappa_0 \ge1$  
lower bound occurs exactly at
the threshold value of $\kappa=1$ at which the behavior of the 
smooth number counting function $\Psi(x, x^{\kappa})$ qualitatively changes.
For $\kappa >1$ one has $\Psi(x, x^{\kappa}) = x^{1- \frac{1}{\kappa} +o(1)},$
while for  $0<\kappa <1$ one has $\Psi(x, x^{\kappa}) = x^{o(1)}.$

 It remains an interesting question 
 whether the GRH assumption can be removed
to  obtain an unconditional upper bound on $\kappa_0$,
possibly worse than the conditional upper bound $\kappa_0 \le 8$ obtained in \cite{LS09}.
This problem does not seem entirely out of reach.
We are not aware of any approach that might yield a
positive unconditional lower bound for $\kappa_0$.

The plan of the paper is as follows.
In Section 2 we establish the lower bound for the $\XYZ$ constant $\kappa_0$
assuming the $\ABC$ conjecture. In Section 3 we discuss the main upper  bound
obtained in \cite{LS09}
for the number of smooth solutions when $\kappa_0 > 8$, given as Theorem~\ref{th13}, 
and discuss conjectures concerning asymptotics of the number of primitive solutions
versus all solutions for $\kappa>2.$
 Section 4  presents results  from \cite{LS09}
giving asymptotic formulas 
counting weighted smooth solutions, from which Theorem~\ref{th13}
is derived, and in Section 5 we  give some ideas of their proofs.
In Section 6 we make concluding remarks, indicating  directions for extending the
results.\\

\paragraph{\bf Acknowledgments.}  The paper is based on a talk of
the first author
at the 26-th Journ\'{e}es Arithm\'{e}tiques in
St. Etienne, July 2009. We thank Peter Hegarty
and Michael Waldschmidt  raising some
interesting questions there about these results. We thank B. M. M. de Weger
for comments and references on  solutions to these equations.
We  thank the reviewer for helpful comments. During work on
this project the first author received support  
from  NSF grants DMS-0500555 and DMS-0801029, and
the second author from NSF grants DMS-0500711 and DMS-1001068.
Some of this work was done 
while the first author visited Stanford University,
and he thanks the Mathematical Research Center at Stanford for support.

%
%
%
%
\section{Lower Bound Assuming the $\ABC$ Conjecture}

The $\ABC$ conjecture was formulated by Masser and Oesterl\'{e}
in 1985, cf. Stewart and Tijdeman \cite{ST86}, Oesterl'{e} \cite{Oe88}. It has many equivalent formulations,
see Bombieri and Gubler \cite[Chap.XII]{BG06}.

%
%
\begin{theorem}\label{th21} 
Assume the weak form of the $\ABC$ conjecture holds.
Then for any $\epsilon >0$ there are only finitely
many primitive solutions 
 $(X,Y, Z)$
to the equation $X+Y=Z$ such that
\[
S(X,Y, Z)  \le  (\kappa_1-\epsilon) \log H(X, Y, Z).
\]
In particular the $\XYZ$-smoothness exponent $\kappa_0$
satisfies $\kappa_0 \ge \kappa_1$.
\end{theorem}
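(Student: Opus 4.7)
The plan is to convert the smoothness hypothesis on $(X,Y,Z)$ into a bound on the radical $R(X,Y,Z)$ via Chebyshev's theorem, and then quote the hypothesized $\ABC$ conjecture directly. Specifically, if $(X,Y,Z)$ is a primitive solution and we set $y := S(X,Y,Z)$, then every prime dividing $XYZ$ is at most $y$, so
\[
R(X,Y,Z) \;=\; \prod_{p \mid XYZ} p \;\le\; \prod_{p \le y} p \;=\; e^{\theta(y)},
\]
and Chebyshev's classical estimate (or the prime number theorem in the form $\theta(y) = (1+o(1))y$ as $y\to\infty$) gives $R(X,Y,Z) \le \exp\bigl((1+o(1))\,y\bigr)$.

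Now suppose toward contradiction that there exists an infinite sequence of primitive solutions $(X_n,Y_n,Z_n)$ with $y_n := S(X_n,Y_n,Z_n) \le (\log H_n)^{\kappa_1 - \epsilon}$, where $H_n := H(X_n,Y_n,Z_n)\to\infty$. Since $\kappa_1 \le 1$ unconditionally, we have $\kappa_1 - \epsilon < 1$, hence $y_n = o(\log H_n)$. Plugging into the radical bound,
\[
R(X_n,Y_n,Z_n) \;\le\; \exp\bigl((1+o(1))(\log H_n)^{\kappa_1 - \epsilon}\bigr) \;\le\; H_n^{\kappa_1 - \epsilon/2}
\]
for all sufficiently large $n$. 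This directly contradicts the weak form of the $\ABC$ conjecture applied with slack $\epsilon/2$, which permits only finitely many primitive solutions with $R(X,Y,Z) \le H^{\kappa_1 - \epsilon/2}$.

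The conclusion $\kappa_0 \ge \kappa_1$ on the $\XYZ$-smoothness exponent is then immediate from the definitions \eqref{109d} and \eqref{109e}. There is no substantive obstacle: the argument amounts to the trivial inequality $R \le \prod_{p \le S} p$ combined with Chebyshev, and the only mild subtlety is ensuring $(\log H)^{\kappa_1 - \epsilon} = o(\log H)$, which relies on the known bound $\kappa_1 \le 1$. This asymmetry — a cheap one-line deduction in this direction versus the delicate Hardy–Littlewood/saddle-point machinery required for the matching upper bound on $\kappa_0$ in Theorem~\ref{th12} — reflects the fact that the smoothness condition is considerably more restrictive than the radical condition of the $\ABC$ conjecture.
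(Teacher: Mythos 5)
Your overall method — bound the radical by $\prod_{p\le S}p=e^{\theta(S)}$, apply the prime number theorem, and invoke the weak $abc$ conjecture — is exactly the paper's argument. However, you have substituted the wrong hypothesis: Theorem~\ref{th21} concerns solutions with the \emph{linear} bound $S(X,Y,Z)\le(\kappa_1-\epsilon)\log H(X,Y,Z)$, whereas you argue against an infinite sequence satisfying the \emph{power} bound $S\le(\log H)^{\kappa_1-\epsilon}$. The latter is the hypothesis of Theorem~\ref{th11}, a strictly weaker statement: since $\kappa_1-\epsilon<1$, one has $(\log H)^{\kappa_1-\epsilon}=o\big((\kappa_1-\epsilon)\log H\big)$, so the linear condition captures more solutions. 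Your intermediate step ``$y_n=o(\log H_n)$'' and your claim that the argument ``relies on the known bound $\kappa_1\le 1$'' are artifacts of this misreading; neither is available nor needed under the actual hypothesis.

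The fix is small and follows the paper verbatim: from $y_n\le(\kappa_1-\epsilon)\log H_n$ and $\theta(y_n)=(1+o(1))y_n$ (one may assume $y_n\to\infty$, since for each fixed smoothness bound the $S$-unit equation has only finitely many solutions), one gets
\[
R(X_n,Y_n,Z_n)\le\exp\big((1+o(1))(\kappa_1-\epsilon)\log H_n\big)=H_n^{(\kappa_1-\epsilon)(1+o(1))}\le H_n^{\kappa_1-\epsilon/2}
\]
for $H_n$ large, contradicting the weak $abc$ conjecture with slack $\epsilon/2$. Here $\kappa_1\le 1$ plays no role; it enters only when deriving Theorem~\ref{th11} from Theorem~\ref{th21}. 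Your closing deduction of $\kappa_0\ge\kappa_1$ from definitions \eqref{109d}--\eqref{109e} is correct, since that step only needs the power-bound finiteness you did establish, but as written you have not proved the stated finiteness under $S\le(\kappa_1-\epsilon)\log H$.
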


\begin{proof}
Let $X+Y=Z$.
The radical $R(X,Y,Z)$ satisfies
\begin{equation}\label{211}
R(X, Y, Z) = {\rm rad}(XYZ)= \prod_{p | XYZ} p \le \prod_{p \le S(X,Y,Z)}p
\end{equation}
Now the prime number theorem implies that
\[
\prod_{p \le y} p = e^{y +o(y)} ~~\mbox{as}~~ y \to \infty,
\]
so that
\[
R(X,Y, Z) \le e^{S(X,Y,Z) (1+o(1))}
\]
The hypothesis that a solution has smoothness satisfying
\[
S(X,Y,Z) \le (\kappa_1- \epsilon) \log H(X, Y, Z)
\]
now yields
\begin{equation}\label{214}
R(X, Y, Z) \le H(X,Y, Z)^{\kappa_1 - \epsilon + o(1)} ~~\mbox{as}~~ H \to \infty.
\end{equation}
Thus  such a solution must satisfy
\begin{equation}\label{214a}
R(X, Y, Z) \le H(X,Y, Z)^{\kappa_1 -\frac{1}{2}\epsilon},
\end{equation}
if its height is sufficiently large, say $H\ge H_0$.
The weak  form of  the $\ABC$ conjecture asserts that the inequality (\ref{214a})
has only finitely many primitive integer solutions $(X, Y, Z)$,
so the result follows.
\end{proof}

\begin{proof}[Proof of Theorem~\ref{th11}.]
This follows immediately
from Theorem~\ref{th21}.
\end{proof}

Following the same proof as  Theorem~\ref{th21} above,
but employing the current best bound
on the $\ABC$ equation, yields
the following 
unconditional result.
%
%
\begin{theorem}~\label{th22}
For each 
$\epsilon > 0$ there are only finitely many
primitive solutions $(X,Y,Z)$ to $X+Y+Z=0$ such that
\begin{equation}\label{220}
S(X, Y, Z) \le (3 - \epsilon)  \log \log H(X,Y, Z).
\end{equation}
\end{theorem}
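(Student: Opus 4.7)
The plan is to mimic the argument of Theorem~\ref{th21} verbatim, replacing the conditional weak form of the $\ABC$ conjecture with the best known \emph{unconditional} lower bound on the radical of an $\ABC$ triple, namely the theorem of Stewart and Yu. In a form convenient for our purposes, their result asserts that every primitive solution of $X+Y=Z$ satisfies
\[
\log H(X,Y,Z) \ll R(X,Y,Z)^{1/3}\bigl(\log R(X,Y,Z)\bigr)^{3},
\]
and in particular $R(X,Y,Z) \ge (\log H(X,Y,Z))^{3-\eta}$ for every fixed $\eta>0$ and every primitive solution of sufficiently large height.

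First I would repeat the prime number theorem step from the proof of Theorem~\ref{th21}: since every prime dividing $XYZ$ lies below $S(X,Y,Z)$,
\[
R(X,Y,Z) \le \prod_{p \le S(X,Y,Z)} p = \exp\!\bigl(S(X,Y,Z)(1+o(1))\bigr) \quad (S\to\infty).
\]
Inserting the hypothesis $S(X,Y,Z) \le (3-\epsilon)\log\log H(X,Y,Z)$ yields
\[
R(X,Y,Z) \le (\log H(X,Y,Z))^{(3-\epsilon)(1+o(1))},
\]
so that, once $H$ is large enough, $R(X,Y,Z) \le (\log H(X,Y,Z))^{3-\epsilon/2}$.

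Next I would confront this upper bound with the Stewart-Yu lower bound applied with $\eta = \epsilon/4$, which forces $R(X,Y,Z) \ge (\log H(X,Y,Z))^{3-\epsilon/4}$ for every primitive solution of sufficiently large height. The two estimates are incompatible, so only finitely many primitive triples can satisfy the smoothness constraint. The complementary case in which $S(X,Y,Z)$ stays bounded as $H \to \infty$ is immediate: a bounded $S$ forces a bounded $R$, whence Stewart-Yu already forces a bounded $H$.

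The argument has essentially no new content beyond Theorem~\ref{th21}; the entire difficulty is absorbed into the Stewart-Yu theorem, and the one place where care is required is verifying that the exponent $1/3$ in their bound matches the coefficient $3$ appearing in the statement of Theorem~\ref{th22}. Any future improvement of the Stewart-Yu exponent from $1/3$ to $1/\kappa$ with $\kappa > 3$ would immediately upgrade the coefficient $3$ in Theorem~\ref{th22} to $\kappa$, narrowing the gap between this unconditional result and the $\ABC$-conditional bound $\kappa_0 \ge 1$ of Theorem~\ref{th11}.
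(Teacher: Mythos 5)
Your proof is correct and follows essentially the same route as the paper: both proofs combine the prime number theorem bound $R \le e^{S(1+o(1))}$ with the Stewart--Yu theorem to force a contradiction for large $H$, after first disposing of the small-$S$ regime. The only small differences are cosmetic: you repackage Stewart--Yu as a lower bound $R \ge (\log H)^{3-\eta}$ before comparing, whereas the paper substitutes the PNT upper bound on $R$ directly into $(\log H)^3 \ll R(\log R)^9$; and you handle the bounded-$S$ case by observing that Stewart--Yu itself bounds $H$ when $R$ is bounded, while the paper invokes finiteness of solutions to the $S$-unit equation. Both devices do the same job, and yours has the mild virtue of needing no additional ingredient beyond Stewart--Yu.
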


\paragraph{Proof.}
The best unconditional
bound in the direction of the $\ABC$ conjecture
is currently that of  Stewart and Yu \cite{SY01}.
They showed there is a constant $c_1$
such that all primitive solutions to the
$abc$ equation satisfy 
\[
H(X, Y, Z) \le \exp( c_1 R^{\frac{1}{3}} (\log R)^3),
\]
where $R=R(X, Y, Z)$. Taking logarithms yields the constraint
\begin{equation}\label{222}
(\log H(X, Y, Z) )^3 \le   (c_1)^3 R(X,Y,Z) (\log R(X,Y,Z))^9 .
\end{equation}

Suppose that an
 $\ABC$ solution $(X, Y, Z)$ satisfies 
the bound (\ref{220}). 
There are only finitely many  $\ABC$ solutions $(X, Y, Z)$
having $S=S(X,Y,Z)$ taking a given value, since the $\ABC$-equation is then
an $S$-unit equation, with $S$ being all primes up to the given value.
Thus 
by excluding a finite set  of solutions we may assume
$S \ge S_0$ for any fixed $S_0$.
If  $S=S(X, Y, Z)$ is sufficiently large
(depending on $\epsilon$) then  the $o(1)$ term in  (\ref{214})
can be replaced by $\frac{1}{6} \epsilon$ and we obtain 
\[
R(X, Y, Z) \le e^{S(X,Y,Z)(1+\frac{1}{6}\epsilon)} \le 
e^{(3- \frac{1}{2}\epsilon) \log \log H}
=(\log H)^{3- \frac{1}{2} \epsilon}.
\]
Substituting this bound in (\ref{222}) shows this
solution must satisfy
\[
 (\log H)^3 \le  
(c_1)^3 (\log H)^{3- \frac{1}{2} \epsilon}(3\log\log H)^9.
\]
However for all sufficiently large $H$, the right side of this inequality  is 
smaller than $(\log H)^3$,
giving a contradiction.
 We conclude that all such solutions have bounded $H$, and  
the finiteness result follows. $~~~\Box$.

%
%
%
%

\section{Upper Bound Assuming ${\rm GRH}$}

Let $\sS(y)$ denote the set of all integers 
having all prime factors $\le y$. We define the counting function $N(H, \kappa)$ of
all smooth solutions $X+Y=Z$ to height $H$ by 
\[
N(H, \kappa) := \# \{ X+Y=Z: ~ 0 \le X, Y, Z \le H, ~~\max_{p |XYZ}\{p\} \le (\log H)^{\kappa}\},~~~~~~~~~~~~
\]
and the  counting function $N^{\ast}(H, \kappa)$ of all primitive integer solutions to height $H$ by
\[
N^{\ast}(H, \kappa) := 
\# \{ X+Y=Z  ~\mbox{primitive} :  (X,Y, Z) \in N(H, \kappa)\}.
\]
A main result of  our paper \cite{LS09} 
is the following  lower bound for 
the number of primitive  integer solutions 
for $\kappa >8$ (\cite[Theorem 1.3]{LS09}). 
 
%
%

\begin{theorem}~\label{th13}
{\em (Counting Primitive Smooth Solutions)}
Assume the truth of the Generalized Riemann Hypothesis (GRH). Then for each fixed
$\kappa > 8$, the number of primitive integer solutions to $X+Y=Z$ below $H$
satisfies
\begin{equation}\label{131}
N^{\ast}(H, \kappa) \ge 
{\fS}_{\infty}\Big(1-\frac{1}{\kappa}\Big) 
{\fS}_{f}^{\ast}\Big(1-\frac{1}{\kappa}, (\log H)^{\kappa} \Big) 
\frac{\Psi(H, (\log H)^{\kappa})^3}{H}(1 + o(1)),
\end{equation}
as $H \to \infty$. 
Here the ``archimedean singular series" (more properly,  ``singular integral")
is defined, for $c > \frac{1}{3}$, by
\begin{equation}\label{132}
{\fS}_{\infty}(c):= c^3 \int_{0}^{1} \int_{0}^{1-t_1} 
(t_1 t_2 (t_1+t_2))^{c-1} dt_1 dt_2,
\end{equation}
and the ``primitive non-archimedean singular series" ${\fS}_{f}^{\ast}(c, y)$ is defined by
\begin{equation}\label{133}
{\fS}_{f}^{\ast}(c, y):= \prod_{p \le y} \Big( 1 + \frac{1}{p^{3c-1}}
\Big( \frac{p}{p-1}\Big(\frac{p-p^c}{p-1}\Big)^3 - 1\Big)\Big)\prod_{p>y}\Big( 1- \frac{1}{(p-1)^2}\Big).
\end{equation}
\end{theorem}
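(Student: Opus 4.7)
The plan is to apply the Hardy--Littlewood circle method to an exponential sum supported on $y$-smooth integers, and then to pass from all integer solutions to primitive ones via Möbius inversion in the semigroup $\sS(y)$. Set $y := (\log H)^{\kappa}$ and
\[
F(\alpha) := \sum_{\substack{n \le H \\ n \in \sS(y)}} e(\alpha n).
\]
The number of all (not necessarily primitive) integer solutions to $X+Y=Z$ with $X,Y,Z \in [1,H] \cap \sS(y)$ equals $N(H,\kappa) = \int_0^1 F(\alpha)^2 \overline{F(\alpha)} \, d\alpha$, and the goal is to extract an asymptotic whose main term matches the claimed expression after Möbius inversion.

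For the major arc analysis, fix an auxiliary parameter $Q = (\log H)^A$ with $A = A(\kappa)$ sufficiently large, and let $\fM$ be the union of intervals of radius $Q/H$ around reduced fractions $a/q$ with $q \le Q$. On $\fM$, writing $\alpha = a/q + \beta$, one factors $F(a/q + \beta)$ as an arithmetic local factor (a sum of $e(an/q)$ weighted by local densities of $y$-smooth integers in residue classes mod $q$) times an analytic factor involving a smoothed version of $\Psi(\cdot,y)$ in the $\beta$ variable, analyzed through the Hildebrand--Tenenbaum saddle-point method. Integration in $\beta$ produces the archimedean factor $\fS_{\infty}(1 - 1/\kappa)$ after identifying $\Psi(H,y)$ via its Mellin representation; summing the local factors over all reduced $a/q$ yields the non-primitive singular series, whose Euler product at primes $p \le y$ matches the bracketed $p$-factor in $\fS_f^{\ast}$.

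For the minor arcs, one needs a bound of the form $|F(\alpha)| \ll \Psi(H,y)/(\log H)^B$ with $B$ large enough, uniformly on $[0,1) \setminus \fM$. This is where GRH is essential: decomposing the indicator of $\sS(y)$ through Dirichlet characters, one invokes GRH to obtain near-square-root cancellation in the resulting character sums, reflecting the control GRH gives over the distribution of primes, and hence of $y$-smooth integers, in residue classes modulo $q$. To pass from $N(H,\kappa)$ to $N^{\ast}(H,\kappa)$, note that every solution factors uniquely as $(dX', dY', dZ')$ with $d \in \sS(y)$ and $\gcd(X',Y',Z')=1$, so that
\[
N(H,\kappa) = \sum_{\substack{d \in \sS(y) \\ d \le H}} N^{\ast}\!\left(\tfrac{H}{d},\, \kappa\right),
\]
modulo the harmless change in smoothness parameter. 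Möbius inversion in $\sS(y)$ converts the non-primitive Euler factor at each $p \le y$ into the displayed bracketed factor in $\fS_f^{\ast}$, while the constraint that no prime $p > y$ divide $\gcd(X,Y,Z)$ (automatic when $X,Y,Z \in \sS(y)$, but required by the definition of $N^{\ast}$ at the level of the full integers) contributes the tail product $\prod_{p > y}(1 - 1/(p-1)^2)$.

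The main obstacle is the minor arc estimate: even assuming GRH, the saving in pointwise bounds on $F(\alpha)$ extracted from the explicit formula, when $y = (\log H)^{\kappa}$, is only a modest power of $\log H$ per application. One must then balance this saving against the $L^2$ mass of $F$, which by Parseval is comparable to $\Psi(H,y) \approx H^{1 - 1/\kappa}$, and against the target size $\Psi(H,y)^3/H \approx H^{2 - 3/\kappa}$ of the main term. The numerical threshold $\kappa > 8$ is precisely what survives this balancing: it is the smallest exponent for which the GRH-based minor arc bound, inserted into the circle method integral, produces a contribution of smaller order than $\Psi(H,y)^3/H$. Closing the gap between $\kappa = 8$ and the heuristic value $\kappa = 3/2$ appears to require substantially new ideas beyond those used here.
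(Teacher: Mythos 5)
Your high-level plan (circle method over $y$-smooth integers, GRH to control the minor arcs, Möbius/inclusion--exclusion to pass to primitive solutions) is the right skeleton and matches the paper's strategy, but two of its load-bearing steps are set up incorrectly, and a third claim about where a factor comes from is wrong.

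\textbf{The minor arc bound you claim cannot close the argument.} You ask for $|F(\alpha)| \ll \Psi(H,y)/(\log H)^B$ on the minor arcs. Insert this into the standard $L^\infty$--$L^2$ estimate: the minor arc contribution is at most
\[
\sup_{\fm}|F| \cdot \int_0^1 |F|^2 \, d\alpha \ll \frac{\Psi(H,y)}{(\log H)^B}\cdot \Psi(H,y) = \frac{\Psi(H,y)^2}{(\log H)^B},
\]
while the target main term is $\Psi(H,y)^3/H \approx \Psi(H,y)^2 \cdot H^{-1/\kappa}$. A log-power can never beat a negative power of $H$, so any bound of the form $\Psi(H,y)/(\log H)^B$ is hopeless regardless of $B$ and of $\kappa$. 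The paper establishes the polynomial bound $E(x,y;\alpha) \ll x^{3/4+\epsilon}$ on the minor arcs (its Theorem~\ref{thm51}(1)), and the condition $\kappa > 8$ is exactly where $x^{3/4+\epsilon}\cdot\Psi(x,y) = o(\Psi(x,y)^3/x)$, i.e.\ $3/4 < 1 - 2/\kappa$. Your final paragraph gestures at the right balancing but attributes it to a log-saving, which is inconsistent.

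\textbf{Your Farey cutoff $Q = (\log H)^A$ is too small.} With polylogarithmic $Q$, the ``minor arcs'' contain points $\alpha$ lying within $1/(q\sqrt{H})$ of reduced fractions $a/q$ with $q$ of size, say, $(\log H)^{2A}$. Near such a point $|F(\alpha)|$ is comparable to $\Psi(H,y)$ divided by a local factor of size $\ll q$, i.e.\ only a log-power below $\Psi(H,y)$, so no bound better than what you stated is available on this dissection. One must take the major arcs to extend to $q$ up to a genuine power of $H$ (the paper uses a Farey dissection of order $x^{1/2}$, with major arcs attached to $q \le x^{1/4}$ and of radius $x^{\delta-1}$), so that on the minor arcs every available approximation has a polynomially large denominator and the local factor itself supplies a polynomial saving, supplemented by the GRH/Lindel{\"o}f-type bound on $L(s,\chi;y)$ for the character decomposition.

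\textbf{The tail product is misattributed.} You claim $\prod_{p>y}\bigl(1 - 1/(p-1)^2\bigr)$ comes from the primitivity constraint at primes $p>y$. It does not: that tail appears identically in the \emph{non}-primitive singular series $\fS_f(c,y)$ in \eqref{333} as well as in $\fS_f^{\ast}(c,y)$ in \eqref{133}. It arises within the circle method from denominators $q$ having a factor $q_1$ composed of primes $>y$, for which the local factor is $\mu(q_1)/\phi(q_1)$; summing over these produces the Euler factors $1-1/(p-1)^2$ for $p>y$. Möbius inversion to pass to primitive solutions only modifies the Euler factors at $p\le y$ (concretely, it multiplies each by $(1-p^{1-3c})$, which is the identity \eqref{423}).

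\textbf{A secondary point on sharp cutoffs.} The paper works throughout with a smooth compactly supported weight $\Phi\in C_c^\infty(\RR_{>0})$, proves asymptotic equalities for the weighted counts $N(x,y;\Phi)$ and $N^{\ast}(x,y;\Phi)$ (Theorems~\ref{th52}, \ref{th121}), and only then obtains Theorem~\ref{th13} as a one-sided lower bound by approximating $\chi_{[0,1]}$ from below. Your direct sharp-cutoff setup does not explain why the theorem is stated as an inequality rather than an asymptotic, and it loses the rapid decay of $\check\Phi(s,\gamma x)$ in the $\gamma$-integrals that makes the major arc analysis converge cleanly. This is fixable in principle for $\kappa>8$ (where $c>1/3$, so the singular integral converges even for the sharp cutoff), but it is an unnecessary complication that the paper deliberately avoids.
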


This result is
derived using a variant of the Hardy-Littlewood method together
with the Hildebrand-Tenenbaum saddle point method for
counting smooth numbers. The method is first applied to 
integer solutions weighted using a smooth compactly-supported
test function in the open interval $\RR_{>0} = (0, \infty)$,
where one can obtain asymptotic formulas, described in Section 4,
for both integer solutions and for primitive integer solutions.
We  obtain  only a
one-sided inequality in  (\ref{131}) because  the weight
function corresponding to (\ref{131}) is not compactly supported,
but can be approximated from below by such functions to give a lower bound.

We discuss the terms appearing on the right side of  this formula.
Since  for $\kappa>1$, we have 
$\Psi(H, (\log H)^{\kappa}) = H^{ 1- \frac{1}{\kappa} + o(1)}$ as $H \to \infty$,
we see that the ``main term" on the right side of the formula (\ref{131}) is essentially of the form 
$H^{2-\frac{3}{\kappa}+o(1)}$ given
by the heuristic of section 1.2, up to the value of the
constants in the ``singular series''. 
We believe  that equality should actually hold in   (\ref{131}) 
in the indicated range of $H$ and $\kappa$, thus giving an asymptotic formula for $\kappa> 8$.

Now we consider the ``main term" on the right side of (\ref{131}) for smaller values of $\kappa$.
It is well-defined   in  the range $\kappa> \frac{3}{2}$ where the heuristic in section 1.2 is expected to apply.
 Here $\kappa> \frac{3}{2}$ corresponds to $c> \frac{1}{3}$,
 and the  ``archimedean singular integral"  (\ref{132})
 defines  an analytic function on the half-plane $Re(c)> \frac{1}{3}$ which
 diverges at $c=\frac{1}{3}$, while 
 the   ``non-archimedean singular series"   $\fS_{f}(c, y)$ is well-defined for all $c>0$.
 The archimedean singular series  is uniformly
 bounded on any half-plane $Re(c)> \frac{1}{3} +\epsilon.$ 
 For the non-archimedean singular series, we find that  its limiting
  behavior as $y= (\log H)^{\kappa} \to \infty$  changes at the threshold value $\kappa=2$, 
  corresponding to $c = \frac{1}{2}$. Namely, one has
\begin{equation}\label{335}
 \lim_{H \to \infty} {\fS}_{f}^{\ast}\Big(1-\frac{1}{\kappa}, (\log H)^{\kappa}\Big) = 
 \left\{ 
\begin{array}{ll} {\fS}_{f}^{\ast}(1-\frac{1}{\kappa}) &~~\mbox{for}~~ \kappa>2,\\
~&~\\
0 &~~\mbox{for} ~~ 0<\kappa \le 2,
\end{array}
\right.
\end{equation}
 where for $c> \frac{1}{2}$ we set
  \begin{equation}\label{321a}
 {\fS}_{f}^{\ast}(c) := \prod_{p} \Big( 1 + \frac{1}{p^{3c-1}}
\Big( \frac{p}{p-1}\Big(\frac{p-p^c}{p-1}\Big)^3 - 1\Big)\Big).
 \end{equation}
  The Euler product  (\ref{321a}) converges absolutely
 and defines an analytic function $ {\fS}_{f}^{\ast}(c)$
 on the
 half-plane $Re(c) > \frac{1}{2}$; 
 this function is uniformly bounded on any
 half-plane $Re(c)> \frac{1}{2}+\epsilon$, 
 Furthermore for values  corresponding the
 $2 < \kappa < \infty$ (i.e. $\frac{1}{2} < c < 1$) the ``non-archimedean
 singular series"  $\fS_{f}^{*}(c, y)$
  remains bounded away from $0$. We conclude that for $2 < \kappa < \infty$
 the ``main term" estimate for $N^{\ast}(H, \kappa)$
 agrees with the prediction of the heuristic argument given earlier.
  In the remaining  region $1 < \kappa\le 2$ 
 one can show that although ${\fS}_{f}^{\ast}(1-\frac{1}{\kappa}, (\log H)^{\kappa}) \to 0$
 as $H \to \infty$, one has
  \[
  {\fS}_{f}^{\ast}\Big(1-\frac{1}{\kappa}, (\log H)^{\kappa}\Big) \gg \exp \Big( -(\log H)^{2-\kappa}\Big). 
  \]
    This bound implies that ${\fS}_{f}^{\ast}(1-\frac{1}{\kappa}, (\log H)^{\kappa}) \gg H^{-\epsilon}$
for any $\epsilon>0$, which for $\frac{3}{2} < \kappa \le 2$ 
shows the ``main term" is still of the same order $H^{2- \frac{3}{\kappa} +o(1)}$ as the
heuristic predicts.
  Thus  it could still  be the case that the ``main term" on the right side of (\ref{132})
 gives a correct order of magnitude estimate  for $N^{\ast}(H, \kappa)$ in this range.
 
 In terms of limiting behavior as $H \to \infty$
 there appears to be  a second threshold value at $\kappa =3$,
 which concerns the relative density of primitive smooth solutions in 
 the set of all smooth solutions. We  formulate this as the following conjecture.
 
%
%

\begin{conjecture}\label{cj32}
{\em (Relative Density Conjecture)}
The relative density of primitive smooth solutions satisfies
\begin{equation}\label{370}
 \lim_{H \to \infty} \frac{ N^{\ast}(H, \kappa)}{N(H, \kappa)}= 
 \left\{ 
\begin{array}{ll}\frac{1}{\zeta(2- \frac{3}{\kappa})},& ~~~\mbox{for}~~ 3 < \kappa < \infty,\\
~&~\\
0 &~~\mbox{for} ~~ 1<\kappa \le 3.
\end{array}
\right.
\end{equation}
\end{conjecture}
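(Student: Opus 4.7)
The plan is to reduce the conjecture to a precise asymptotic for $N^*(H, \kappa)$ and then compare it to $N(H, \kappa)$ via the natural decomposition by common factor. First, every smooth solution $(X,Y,Z)$ with $H(X,Y,Z) \le H$ factors uniquely as $(X,Y,Z) = d(X_0, Y_0, Z_0)$, where $d = \gcd(X,Y,Z)$ is automatically $(\log H)^\kappa$-smooth and $(X_0, Y_0, Z_0)$ is a primitive solution with $H(X_0, Y_0, Z_0) \le H/d$ and smoothness still bounded by $(\log H)^\kappa$. Writing $\widetilde{N}^*(H, H', \kappa)$ for the count of primitive solutions of height at most $H'$ with smoothness cutoff $(\log H)^\kappa$, this yields
\[
N(H, \kappa) \;=\; \sum_{\substack{d \text{ smooth} \\ d \le H}} \widetilde{N}^*(H, H/d, \kappa).
\]

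Next, assume a two-sided version of Theorem~\ref{th13}, that is, that its conditional lower bound is matched by a corresponding upper bound and extended to the range $\kappa > 1$. One would then have the asymptotic $N^*(h, \kappa) \sim C(\kappa)\, h^{2-3/\kappa}$, with $C(\kappa)$ the product of the archimedean singular integral and the primitive non-archimedean singular series evaluated at $c = 1-1/\kappa$. The difference between $\widetilde{N}^*(H, H/d, \kappa)$ and $N^*(H/d, \kappa)$ is only a mild relaxation of the smoothness threshold, negligible to leading order for $d$ not too close to $H$. Substituting the asymptotic gives
\[
N(H, \kappa) \;\sim\; C(\kappa)\, H^{2-3/\kappa} \sum_{\substack{d \text{ smooth} \\ d \le H}} d^{-(2-3/\kappa)}.
\]

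When $\kappa > 3$, the exponent satisfies $2 - 3/\kappa > 1$, so the summands are dominated by the absolutely convergent series $\sum_d d^{-(2-3/\kappa)} = \zeta(2-3/\kappa)$. Since the smoothness cutoff $(\log H)^\kappa$ eventually exceeds every fixed prime, dominated convergence yields $\sum_{d\text{ smooth},\, d\le H} d^{-(2-3/\kappa)} \to \zeta(2-3/\kappa)$. Combined with $N^*(H, \kappa) \sim C(\kappa) H^{2-3/\kappa}$ this produces the ratio $1/\zeta(2-3/\kappa)$, as required. When $1 < \kappa \le 3$, one has $2 - 3/\kappa \le 1$ and the corresponding sum over smooth $d \le H$ diverges as $H \to \infty$ (at rate $\log\log H$ when $\kappa=3$, polynomially when $\kappa<3$, extracted from $\Psi(H, (\log H)^\kappa) = H^{1-1/\kappa + o(1)}$ via partial summation). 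Dividing gives $N^*(H,\kappa)/N(H, \kappa) \to 0$, establishing the second case. One could alternatively perform this reduction by Möbius inversion applied directly to $N^*$, but the forward decomposition is more transparent.

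The principal obstacle is the matching upper bound for $N^*(H, \kappa)$ needed to upgrade Theorem~\ref{th13} from a one-sided inequality to an asymptotic formula, together with its extension from $\kappa > 8$ down to the full range $\kappa > 1$. This appears beyond current techniques: the Hardy--Littlewood approach of \cite{LS09} produces matching upper bounds only for weight functions compactly supported in $(0,\infty)$, whereas obtaining an asymptotic for the unweighted count $N^*$ requires additional uniform control near the endpoints, and pushing past $\kappa = 2$ runs into the qualitative change in behavior of $\Psi(x,y)$ noted in Section~\ref{sec15}. A secondary but manageable technicality is uniform control of the error terms for $d$ close to $H$, where the asymptotic for $N^*(H/d, \kappa)$ degrades; these regimes contribute negligibly and can be bounded crudely via Theorem~\ref{th13} applied at small heights.
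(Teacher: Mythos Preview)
The statement is a \emph{conjecture} in the paper, not a theorem; the paper offers no proof of it, so there is nothing to compare your argument against in that sense. What the paper does establish is only a weighted analogue (Theorem~\ref{th35}) valid for $\kappa>8$ under GRH, and the route there is quite different from yours: rather than decomposing $N$ as a sum of rescaled $N^{\ast}$'s over the common factor $d$, the paper obtains \emph{separate} asymptotic formulas for the weighted counts $N(x,y;\Phi)$ and $N^{\ast}(x,y;\Phi)$ via the circle method (Theorems~\ref{th52} and~\ref{th121}), and then deduces the ratio $1/\zeta(2-\tfrac{3}{\kappa})$ from the Euler-product identity $\fS_f^{\ast}(c)=\fS_f(c)/\zeta(3c-1)$. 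Your gcd-decomposition is the more transparent heuristic and explains \emph{why} the zeta factor should appear; the paper's approach actually proves something, but only in the restricted weighted setting.

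Your own proposal is candid about being conditional on inputs that are currently unavailable, and your diagnosis of the obstacles is accurate. One point worth sharpening: in the range $\tfrac{3}{2}<\kappa\le 2$ you cannot simply invoke a conjectural asymptotic $N^{\ast}(h,\kappa)\sim C(\kappa)h^{2-3/\kappa}$, since the paper itself notes that the non-archimedean singular series $\fS_f^{\ast}(1-\tfrac{1}{\kappa},(\log H)^{\kappa})$ tends to $0$ in that range, so the constant $C(\kappa)$ would have to be replaced by a slowly varying factor. And for $1<\kappa\le\tfrac{3}{2}$ the heuristic main term for $N^{\ast}$ is $o(1)$, so your decomposition argument breaks down entirely there; the paper instead supports the conjectured limit $0$ in the sub-range $1<\kappa<2$ by the crude observation that multiples of $(1,1,2)$ already force $N(H,\kappa)\ge H^{1-1/\kappa+o(1)}$, which dominates any plausible size of $N^{\ast}(H,\kappa)$.
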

 
 Assuming GRH, our paper \cite{LS09} shows that a weighted version of this conjecture holds 
 for $\kappa>8$, given here as Theorem \ref{th35}.
  Furthermore  for each $\kappa > 3$ 
  the ratios of the conjectured ``main terms" in the 
 asymptotic formulas for these quantities have this limiting value, a
 result implied by (\ref{423}) below.

 We remark that the number $N(H, \kappa)$ of all smooth integer solutions already has a 
 contribution from smooth multiples of the solution $(X, Y, Z) =(1, 1, 2)$ that gives
 \begin{equation}\label{375}
 N(H, \kappa) \ge \Psi( \tfrac{1}{2} H, (\log H)^{\kappa}) \ge H^{1- \frac{1}{\kappa} +o(1)}, ~~\mbox{as}~~H \to \infty.
 \end{equation}
 For  $1 \le \kappa <2$ 
 this  lower bound exceeds the heuristic argument  estimate $H^{2- \frac{3}{\alpha} +o(1)}$
 for  $N^{\ast}(H, \kappa)$  
    by a positive power of $H$. 
  This  fact indicates that the heuristic in Section 1.2 fails for $N(H, \kappa)$ for $1 < \kappa < 2$,
 and supports the truth of Conjecture \ref{cj32} in this  range.

%
%
%
%

\section{Asymptotic Formulas for Weighted Smooth Solutions}

We now describe results giving  asymptotic
formulas that count weighted smooth solutions, when $\kappa> 8$.
Let $\Phi(x)\in C_{c}^{\infty}(\RR_{>0})$ be a smooth
compactly supported (real-valued) function on the positive real
axis.
We define the  weighted sum
\begin{equation}\label{304}
N(x, y; \Phi) := \sum_{{X, Y, Z \in  \sS(y)}\atop{X+Y=Z}} 
\Phi\Big(\frac{X}{x}\Big) \Phi\Big(\frac{Y}{x}\Big) \Phi\Big(\frac{Z}{x}\Big),
\end{equation}
which counts both primitive and imprimitive solutions. We also set
\begin{equation}\label{304b}
N^{\ast}(x, y; \Phi) := \sum_{{X, Y, Z \in  \sS(y)}\atop{X+Y=Z, gcd(X,Y, Z)=1}} 
\Phi\Big(\frac{X}{x}\Big) \Phi\Big(\frac{Y}{x}\Big) \Phi\Big(\frac{Z}{x}\Big),
\end{equation}
which counts only primitive solutions. 
The asymptotic estimates below are obtained for a general $\Phi$.
For applications one may take
 $\Phi(x)$ to be  a nonnegative real-valued ``bump function'' supported
on $[0,1],$ which is equal to $1$ on $[\epsilon, 1- \epsilon]$
and is equal to $0$ outside $[\frac{\epsilon}{2}, 1- \frac{\epsilon}{2}]$.
This choice of weight function will ensure  that we essentially count solutions
$X+Y=Z$ in $y$-smooth numbers restricted to the range $[1,x]$.

In  \cite[Theorem 2.1]{LS09} we 
obtain  the following 
asymptotic formula, valid for  $\kappa >8$,
which counts both primitive and imprimitive
 weighted integer solutions.

%
%
\begin{theorem}~\label{th52}
{\em (Counting Weighted Smooth Integer Solutions)} 
Assume  the truth of the GRH.
Let $\Phi(x)$ be a fixed smooth compactly supported, real-valued 
weight function in  $ C_c^{\infty}(\RR^{+})$. 
Let $x$ and $y$ be large, with
$(\log x)^{8+\delta} \le y \le \exp( (\log x)^{1/2 - \delta})$
for some $\delta >0$. Define $\kappa$ by the relation $y= (\log x)^{\kappa}$.
 Then, we have
\begin{equation}\label{562}
N(x, y; \Phi) =\fS_{\infty}\Big(1-\frac{1}{\kappa}, \Phi\Big) \fS_f\Big(1-\frac{1}{\kappa}, y\Big) \frac{\Psi(x,y)^3}{x} +
O_{\eta}\left(  \frac{\Psi(x,y)^3}{x \log y}\right),
\end{equation}
Here the ``weighted archimedean singular series" (more properly,  ``singular integral")
is defined by
\begin{equation}\label{332}
{\fS}_{\infty}(c, \Phi):= c^3 \int_{0}^{\infty} \int_{0}^{\infty} 
\Phi(t_1)\Phi(t_2) \Phi(t_1+t_2)t_1 t_2(t_1+t_2)^{c-1} dt_1 dt_2,
\end{equation}
and the ``non-archimedean singular series" ${\fS}_{f}(c, y)$ is defined by
\begin{equation}\label{333}
{\fS}_{f}(c, y):= \prod_{p \le y} \Big( 1 + \frac{p-1}{p(p^{3c-1}-1)}
\Big(\frac{p-p^c}{p-1}\Big)^3 \Big)
\prod_{p>y}\Big( 1- \frac{1}{(p-1)^2}\Big).
\end{equation}
\end{theorem}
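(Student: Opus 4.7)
The plan is to apply the Hardy--Littlewood circle method to the smooth-number exponential sum
\[
\hat{F}(\alpha) := \sum_{n \in \sS(y)} \Phi\!\Big(\frac{n}{x}\Big) e^{2\pi i n\alpha},
\]
so that by orthogonality
\[
N(x,y;\Phi) = \int_0^1 \hat{F}(\alpha)^2\,\overline{\hat{F}(\alpha)}\,d\alpha.
\]
I would decompose $\RR/\ZZ$ into major arcs $\fM$, consisting of short intervals around Farey fractions $a/q$ with $1 \le q \le Q$ and $(a,q)=1$, and the complementary minor arcs $\fm$. The cut-off $Q$ would be chosen as a suitable small positive power of $\log x$ matched to the scale $y = (\log x)^{\kappa}$.

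For $\alpha = a/q + \beta$ with $|\beta|$ small, I would split the sum $\hat{F}(\alpha)$ by the residue of $n$ modulo $q$ and expand the resulting conditions using Dirichlet characters modulo $q$. The principal character produces the twisted smooth-number sum $\sum_{n\in\sS(y),\,(n,q)=1} \Phi(n/x) e^{2\pi i n\beta}$, which can be evaluated by the Hildebrand--Tenenbaum saddle point method (applicable in the range $y \le \exp((\log x)^{1/2-\delta})$) to obtain $(\Psi(x,y)/\varphi(q))$ times the smooth Fourier factor $\int \Phi(t) e^{2\pi i \beta x t}\,dt$, up to an admissible error. Contributions from non-principal characters are to be shown negligible by GRH bounds on the associated Dirichlet $L$-functions. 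Summing over $a \bmod q$ then collapses the principal contribution into a Ramanujan sum $c_q$, and summing further over $q \le Q$ assembles the local factors into the truncated Euler product $\fS_f(1-1/\kappa, y)$. The $\beta$-integrals from the three copies of $\hat{F}$, once combined and rescaled by $\beta \mapsto \beta/x$, reproduce the weighted archimedean integral $\fS_\infty(1-1/\kappa, \Phi)$, giving the main term $\fS_\infty\, \fS_f\, \Psi(x,y)^3/x$.

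The main obstacle is the minor arc bound. I would aim to establish $|\hat{F}(\alpha)| \ll \Psi(x,y)/(\log y)^A$ uniformly for $\alpha \in \fm$, with $A$ large enough that, combined with Parseval's bound $\int_0^1 |\hat{F}(\alpha)|^2 \,d\alpha \ll \Psi(x,y)^2/x$, the minor arcs contribute only $O(\Psi(x,y)^3/(x\log y))$ to the triple integral. Such a bound should follow from a Vaughan-style bilinear decomposition of the exponential sum over $\sS(y)$, reducing matters to type I and type II character sums which can be controlled under GRH. The restriction $\kappa > 8$ enters precisely at this point: it is the numerical threshold at which the bilinear estimates yield a nontrivial saving against the density $\Psi(x,y) \asymp x^{1-1/\kappa+o(1)}$, and below it the available smooth-exponential-sum bounds are too weak to improve on the trivial estimate $|\hat{F}(\alpha)| \le \Psi(x,y)$. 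The final step is to assemble the pieces, verifying that the major arc main term emerges cleanly and that the cumulative errors -- from truncating the Euler product at $q \le Q$, from the saddle point approximation, and from the boundary of the $\beta$-range in each major arc -- all fit into the stated error $O(\Psi(x,y)^3/(x\log y))$.
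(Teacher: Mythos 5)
Your general framework is right --- circle method, Hildebrand--Tenenbaum saddle point on the major arcs, GRH to control the minor arcs --- but the proposal breaks down at the minor-arc step in a way that is not just a matter of filling in details.

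First, your Parseval bound is incorrect. Since $\int_0^1 |\hat{F}(\alpha)|^2\,d\alpha = \sum_{n \in \sS(y)} |\Phi(n/x)|^2 \asymp \Psi(x,y)$, not $\Psi(x,y)^2/x$. With the correct Parseval bound, a minor-arc estimate of the form $\sup_{\fm}|\hat{F}(\alpha)| \ll \Psi(x,y)/(\log y)^A$ only gives a minor-arc contribution $\ll \Psi(x,y)^2/(\log y)^A$, and for this to beat the main term $\Psi(x,y)^3/x$ you would need $x/(\log y)^A \ll \Psi(x,y)$, i.e.\ $x^{1/\kappa+o(1)} \ll (\log y)^A$, which is false. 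So a logarithmic saving on the minor arcs cannot close the argument; you need a \emph{power-of-$x$} saving $\sup_{\fm}|\hat{F}(\alpha)| \ll \Psi(x,y)^2/x = x^{1-2/\kappa+o(1)}$.

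Second, the mechanism by which the paper obtains that power saving is not a Vaughan-type bilinear decomposition. In the regime $y = (\log x)^\kappa$, the bilinear/Type-I/Type-II technology for smooth exponential sums (de la Bret\`eche, Tenenbaum, etc.) is not known to give any useful saving; the paper explicitly notes those methods require $y \ge \exp(c(\log\log x)^2)$. Instead, the paper Farey-dissects $[0,1]$ at level $q \le x^{1/2}$ and, for $\alpha = a/q + \gamma$, expands $E(x,y;\alpha)$ exactly in Dirichlet characters modulo divisors of $q$. The principal characters give the ``local main term'' $M(x,y;q,\gamma)$, and GRH bounds on the partial Euler products $L(s,\chi;y) = \prod_{p\le y}(1-\chi(p)p^{-s})^{-1}$ (a Lindel\"of-type estimate, Proposition~\ref{le43}) show that the non-principal contribution is $O(x^{3/4+\epsilon})$ uniformly. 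That $x^{3/4+\epsilon}$ error, compared against $\Psi(x,y)^2/x = x^{1-2/\kappa+o(1)}$, is exactly where $\kappa > 8$ is forced --- not from ``bilinear estimates yielding a nontrivial saving against the density'' as you suggest. The major arcs are also taken much larger than a power of $\log x$: $q \le x^{1/4}$ with $|\gamma|\le x^{\delta-1}$, matching the $x^{3/4+\epsilon}$ error scale. Your proposal would need to replace the Vaughan-identity step with this character-expansion and partial-$L$-function argument to have any hope of working; as written the minor-arc bound you aim for is neither attainable nor sufficient.
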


The compact support of $\Phi(x)$ away from $0$ guarantees that 
the ``weighted archimedean singular series" ${\fS}_{\infty}(c, \Phi)$ is
defined for all real $c$. The 
``non-archimedean singular series"  ${\fS}_{f}(c, y)$ is given
by an Euler product 
 which converges to an analytic function for $Re(c)> \frac{1}{3}$ and
diverges at $c= \frac{1}{3}$.
It has a phase change in its behavior as $y \to \infty$ at the threshold value $c=\frac{2}{3}$
corresponding to $\kappa=3$. Namely,  we have
\begin{equation}\label{421}
\lim_{y \to \infty} {\fS}_{f}\Big(1-\frac{1}{\kappa}, y\Big)= \left\{ 
\begin{array}{ll} {\fS}_{f}\Big(1-\frac{1}{\kappa}\Big) &~~\mbox{for}~~ \kappa>3,\\
~&~\\
+\infty &~~\mbox{for} ~~ 0<\kappa \le 3,
\end{array}
\right.
\end{equation}
where for $c> \frac{2}{3}$ we define
\begin{equation}\label{422}
\fS_{f}(c) := \prod_{p}\Big( 1 + \frac{p-1}{p(p^{3c-1}-1)} 
\Big(\frac{p - p^c}{p-1}\Big)^3 \Big).
\end{equation}
The Euler product  (\ref{422}) converges absolutely to an analytic function of
$c$ on the half-plane
 Re$(c) > \frac{2}{3},$ and  diverges at $c= \frac{2}{3}$. Outside this half-plane,
 on the range $\frac{1}{2} < c  \le \frac{2}{3}$, although one has
 ${\fS}_{f}(1-\frac{1}{\kappa}, y) \to \infty$ as $y \to \infty$,
 one can show that
 \[
 {\fS}_{f}\Big(1-\frac{1}{\kappa}, y\Big) \ll \exp \Big( y^{3/\kappa-1}  \Big).
 \]
 This  implies  that  for $2 < \kappa \le 3$ one has 
 $ {\fS}_{f}(1-\frac{1}{\kappa}, (\log H)^{\kappa})\ll H^{\epsilon}$ for any positive $\epsilon$,
 which suggests that the heuristic argument of section 1.2 may
 continue to apply to $N(H, \kappa)$ on this
 range.
 
For all $\Phi(x) \in C_c^{\infty}(\RR_{>0})$ 
the  ``weighted archimedean singular series" (\ref{332}) defines an
entire function of $c$. 
For the  special  
weight function $\Phi(x)$ being the step function $\chi_{[0,1]}(x)$
  which is $1$ on $[0,1]$, and $0$
elsewhere, the
``weighted archimedean singular series"  integral (\ref{332}) 
becomes the ``archimedean singular series" given in  (\ref{132}).
This weight function is not compactly supported on $(0, \infty)$
so is not covered by  Theorem~\ref{th52}. Indeed (\ref{132}) defines an
analytic function on the half-plane $Re(c) > \frac{1}{3}$,
which diverges when approaching $c=\frac{1}{3}$.

 To obtain  a bound for primitive solutions,
for a given range of $y$, 
we perform an inclusion-exclusion argument. 
 The following result applies to arbitrary compactly supported smooth test
functions (\cite[Theorem 2.2]{LS09}).

%
%

\begin{theorem}~\label{th121}
{\em (Counting Weighted Primitive Integer Solutions)}
Assume the truth of the GRH.
Let $\Phi(x)$ be a fixed, smooth, compactly supported,
real valued function  in $ C_c^{\infty}(\RR^{+})$,
Choose any  $\delta>0$ and 
let $x$ and $y$ be large with $(\log x)^{8+\delta} \le y \le \exp( (\log x)^{1/2 - \delta}).$
Define $\kappa$ by the relation $y= (\log x)^{\kappa}$.
Then, we have
\[
N^{\ast}(x, y; \Phi)= 
{\fS}_{\infty}\Big(1-\frac{1}{\kappa}, \Phi \Big) 
{\fS}_{f}^{\ast}\Big(1-\frac{1}{\kappa}, y \Big) 
\frac{\Psi(x,  (\log x)^{\kappa})^3}{x}
+ O\Big( \frac{\Psi(x, y)^3}{x(\log y)^{\frac{1}{4}}} \Big),
\]
where the primitive non-archimedean singular series $\fS_{f}^{\ast}(c,y)$
was defined in (\ref{133}).
\end{theorem}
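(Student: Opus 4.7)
The natural approach is M\"obius inversion over the common divisor $d = \gcd(X,Y,Z)$, which must itself lie in $\sS(y)$. Writing any smooth triple uniquely as $d(X',Y',Z')$ with $(X',Y',Z')$ primitive yields
\[
N(x, y; \Phi) = \sum_{d \in \sS(y)} N^{\ast}(x/d, y; \Phi),
\qquad
N^{\ast}(x, y; \Phi) = \sum_{d \in \sS(y)} \mu(d)\, N(x/d, y; \Phi).
\]
Because $\Phi$ is compactly supported on some $[a, b] \subset (0, \infty)$, the right-hand sum in fact terminates at $d \le x/a$ and is finite. The plan is to apply Theorem~\ref{th52} to each $N(x/d, y; \Phi)$ with $d$ small, bound the contribution of large $d$ crudely, and reassemble the main term as $\fS_{\infty}(c, \Phi)\,\fS_f^{\ast}(c, y)\,\Psi(x, y)^3/x$ with $c = 1 - 1/\kappa$.

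I would truncate the M\"obius sum at a threshold $D$, to be chosen as a suitable small positive power of $\log x$. For $d \le D$ one has $\log(x/d) = (1 - o(1))\log x$, so the hypothesis of Theorem~\ref{th52} still holds for $x/d$ in place of $x$ (with a slightly diminished $\delta$), and its asymptotic produces a main term with parameter $c_d := 1 - 1/\kappa_d$, where $\kappa_d := \log y/\log(x/d) = \kappa + O(\log d/\log x)$, plus an inner error of order $\Psi(x/d, y)^3/((x/d)\log y)$. For $D < d \le x/a$ I would use a crude upper bound such as $N(x/d, y; \Phi) \ll_{\Phi} \Psi(x/d, y)^2$ and verify that this tail is absorbed into the claimed error. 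In the main-term regime, the Hildebrand--Tenenbaum saddle-point asymptotic (valid under GRH in the given range of $y$) controls $\Psi(x/d, y)^3/(x/d)$ uniformly in terms of $\Psi(x, y)^3/x$, and the corrections from expanding $c_d$ around $c$ in $\fS_{\infty}(c_d, \Phi)$ and $\fS_f(c_d, y)$ can be folded into the error. The remaining M\"obius sum over squarefree $y$-smooth $d$ then splits, via multiplicativity, as an Euler product over primes $p \le y$; combining this prime by prime with the local factor of $\fS_f(c, y)$ produces precisely the local factor of $\fS_f^{\ast}(c, y)$ shown in the Euler product defining $\fS_f^{\ast}$, and the $p > y$ factors are identical in both singular series and cancel.

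The central technical obstacle is quantitative. The inner error from each use of Theorem~\ref{th52} is of order $\Psi(x/d, y)^3/((x/d)\log y)$, and the first-order corrections from expanding $c_d$ around $c$ contribute further terms of comparable magnitude; accumulating these over $d \le D$ weighted by $\mu(d)$ while keeping the $d > D$ tail small forces some logarithmic loss, which accounts for the weaker denominator $(\log y)^{1/4}$ in the stated error compared with $\log y$ in Theorem~\ref{th52}. Calibrating $D$ so that the accumulated inner error, the $c_d$-expansion error, and the trivial tail bound all stay below $\Psi(x, y)^3/(x(\log y)^{1/4})$ is the delicate heart of the argument; the uniform saddle-point asymptotic for $\Psi(x/d, y)$ is the other essential analytic ingredient, and once both are in hand the Euler-product matching of local factors is essentially a routine algebraic check.
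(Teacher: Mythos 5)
Your proposal matches the paper's approach: the paper explicitly states (in \S1.4, \S3, and the end of \S5) that Theorem~\ref{th121} is obtained from Theorem~\ref{th52} by an inclusion--exclusion argument over the common divisor $d=\gcd(X,Y,Z)\in\sS(y)$, and that this M\"obius sum is what converts $\fS_f$ into the primitive singular series $\fS_f^{\ast}=\fS_f/\zeta(3c-1)$ (equation~(\ref{423})), exactly as you describe. Your identification of the main technical burden --- truncating the M\"obius sum, using the Hildebrand--Tenenbaum saddle-point estimate $\Psi(x/d,y)\approx d^{-c}\Psi(x,y)$ uniformly in $d$, and tracking the resulting accumulation of errors to explain the $(\log y)^{1/4}$ loss --- is also the right reading of what the proof in \cite{LS09} must do.
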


Theorem~\ref{th13} is an immediate consequence of Theorem~\ref{th121},
taking an appropriate limit of nonnegative weight functions $\Phi(x)$ approaching the characteristic
function  $\chi_{[0,1]}(x)$.

 The two theorems above imply the truth of  a weighted
analogue of Conjecture \ref{cj32} for $\kappa>8$, as follows.

%
%

\begin{theorem}~\label{th35}
{\em (Relative Density of Weighted Smooth Solutions)}
Assume the truth of the GRH.
Then for any nonnegative real-valued function $\Phi(x) \in C_c^{\infty}(\RR_{>0})$
not identically zero,  there holds 
\[
 \lim_{x \to \infty} \,\frac{ N^{\ast}(x, (\log x)^\kappa; \Phi)}{N(x, (\log x)^{\kappa}; \Phi)} = 
 \,\frac{1}{\zeta (2- \frac{3}{\kappa})}~~\mbox{for}~~ \kappa>8.
 \]
\end{theorem}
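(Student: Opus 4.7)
The plan is to divide Theorem~\ref{th121} by Theorem~\ref{th52}, both evaluated at $y = (\log x)^\kappa$ with $c := 1 - 1/\kappa$. The two asymptotic formulas share the common factor $\fS_\infty(c, \Phi) \cdot \Psi(x, y)^3/x$ in their main terms, which cancels in the ratio, leaving
\[
\frac{N^{\ast}(x, (\log x)^\kappa; \Phi)}{N(x, (\log x)^\kappa; \Phi)} = \frac{\fS_f^{\ast}(c, y)}{\fS_f(c, y)} + O\Big(\frac{1}{(\log y)^{1/4}}\Big).
\]
Since $\kappa > 8$ gives $c > 7/8 > 2/3$, the analyses following Theorem~\ref{th52} and equation~(\ref{335}) show that both $\fS_f(c, y)$ and $\fS_f^{\ast}(c, y)$ tend to finite positive limits $\fS_f(c), \fS_f^{\ast}(c) > 0$. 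Provided $\fS_\infty(c, \Phi) > 0$ (which holds whenever the support of $\Phi$ admits a triple $(t_1, t_2, t_1+t_2)$ with all three points in $\mathrm{supp}\,\Phi$; otherwise both $N$ and $N^{\ast}$ are identically zero and the statement is vacuous), the main terms dominate the error terms since $\log y = \kappa\log\log x \to \infty$, and the limit of $N^{\ast}/N$ reduces to $\fS_f^{\ast}(c)/\fS_f(c)$.

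The identification $\fS_f^{\ast}(c)/\fS_f(c) = 1/\zeta(2 - 3/\kappa)$ proceeds most transparently via M\"obius inversion on the counting functions themselves. Any smooth triple $(X, Y, Z)$ with $X+Y=Z$ has a unique decomposition $(X, Y, Z) = (dX', dY', dZ')$ with $(X', Y', Z')$ primitive and $d = \gcd(X, Y, Z) \in \sS(y)$, yielding
\[
N(x, y; \Phi) = \sum_{d \in \sS(y)} N^{\ast}(x/d, y; \Phi).
\]
Substituting Theorem~\ref{th121} inside the sum on the right and Theorem~\ref{th52} on the left, and using the GRH-based saddle-point estimate $\Psi(x/d, y)/\Psi(x, y) = d^{-\alpha(x, y)}(1 + o(1))$ with saddle exponent $\alpha(x, y) \to c$ as $x \to \infty$, the common factor $\fS_\infty(c, \Phi)\,\Psi(x, y)^3/x$ cancels and one arrives at
\[
\fS_f(c, y) = \fS_f^{\ast}(c, y)\prod_{p \le y}(1 - p^{1 - 3\alpha})^{-1}(1 + o(1)).
\]
Letting $y \to \infty$, the Euler product converges to $\zeta(2 - 3/\kappa)$ since $3\alpha - 1 \to 2 - 3/\kappa > 1$ (from $\kappa > 3$), yielding the required identity.

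The main obstacle will be the uniform control of error terms in the M\"obius inversion. The per-summand error $O(\Psi(x/d, y)^3/((x/d)(\log y)^{1/4}))$ from Theorem~\ref{th121} must be summed over all $d \in \sS(y)$, and the behavior of $\Psi(x/d, y)$ when $x/d$ is comparable to $y$ (where the smooth-number count is most delicate) must not destroy the asymptotic. Under GRH and $\kappa > 8$, however, the Hildebrand--Tenenbaum saddle-point bounds are uniform in the parameter range of Theorem~\ref{th121}, and the absolute convergence of $\sum_{d \in \sS(y)} d^{1 - 3\alpha}$ (ensured by $3\alpha - 1 > 7/4 > 1$) guarantees that the accumulated error vanishes as $x \to \infty$.
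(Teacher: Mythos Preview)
Your overall plan---divide the asymptotic formula of Theorem~\ref{th121} by that of Theorem~\ref{th52} and reduce to computing $\fS_f^{\ast}(c)/\fS_f(c)$---is exactly what the paper does. Where you diverge is in identifying this ratio with $1/\zeta(3c-1)$. The paper handles this in one line: it is the \emph{algebraic} Euler-product identity
\[
\fS_f^{\ast}(c) = \prod_p \Big(1 + \tfrac{p-1}{p(p^{3c-1}-1)}\Big(\tfrac{p-p^c}{p-1}\Big)^3\Big)\Big(1 - \tfrac{1}{p^{3c-1}}\Big) = \frac{\fS_f(c)}{\zeta(3c-1)},
\]
verified factor by factor (this is equation~(\ref{423})). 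No analysis, no GRH, no saddle point---just multiply out the $p$-th Euler factor of $\fS_f(c)$ by $(1-p^{1-3c})$ and check it matches the $p$-th factor of $\fS_f^{\ast}(c)$.

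Your M\"obius-inversion route is an unnecessary detour that re-invokes the full analytic machinery (both theorems again, saddle-point asymptotics for $\Psi(x/d,y)$, GRH) to recover an elementary identity. Worse, it has a genuine gap: applying Theorem~\ref{th121} to $N^{\ast}(x/d, y; \Phi)$ requires its hypotheses $(\log(x/d))^{8+\delta} \le y \le \exp((\log(x/d))^{1/2-\delta})$, and these fail once $d$ is large enough that $x/d$ drops below the range. You gesture at this (``the behavior of $\Psi(x/d,y)$ when $x/d$ is comparable to $y$'') but do not actually truncate the sum or bound the tail. The fix is not to repair the truncation---it is to drop the detour entirely and verify~(\ref{423}) by hand.

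Your remark about $\fS_\infty(c,\Phi)$ possibly vanishing is a fair point (and arguably the paper is slightly casual here too), but your treatment of it is correct: if the support of $\Phi$ admits no additive triple then $N = N^{\ast} = 0$ identically and the ratio is vacuous.
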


\paragraph{Proof.}
This result is based on the identity of Euler products
 \begin{equation}\label{423}
 \fS_{f}^{\ast}(c)
 := \prod_{p}\Big( \Big( 1 + \frac{p-1}{p(p^{3c-1}-1)} 
\left(\frac{p - p^c}{p-1}\right)^3 \Big) \Big(1- \frac{1}{p^{3c-1}}\Big)\Big)= \frac{1}{\zeta(3c- 1)}\fS_{f}(c).
\end{equation}
This identity shows that $\fS_{f}(c)$ has a meromorphic continuation to the half-plane
$Re(c)> \frac{1}{2}$, with its only singularity on
this region being a simple pole at $c=\frac{2}{3}$ having residue 
$\frac{1}{3}\fS_{f}^{\ast}(\frac{2}{3})$.
In particular, for real $c = 1 -\frac{1}{\kappa}>\frac{2}{3}+\epsilon$ we have
$$
\fS_{f}(c, y) = \fS_f(c) \Big(1+ O_{\epsilon}\Big(\frac{1}{y}\Big)\Big),
$$
and for real $c > \frac{1}{2} +\epsilon$ we have
$$
\fS_{f}^{\ast}(c, y) = \fS_f^{\ast}(c) \Big(1+ O_{\epsilon}\Big(\frac{1}{y}\Big)\Big).
$$

 Using these estimates in the main terms of Theorem~\ref{th52} and  Theorem~\ref{th121}, 
 yields  for $\kappa> 8 +\delta$, the estimate  
\begin{equation}\label{1202b}
N^{\ast}(x, (\log x)^{\kappa}; \Phi)=
\frac{1}{\zeta(2-\frac{3}{\kappa})}N(x, (\log x)^{\kappa}; \Phi))
\left( 1+ O_{\delta}
\left( \frac{1}{(\log\log x)^{\frac{1}{4}}}\right)\right).
\end{equation}
The positivity hypothesis  on $\Phi(x)$ implies that $N(x, (\log x)^{\kappa}; \Phi)>0$
so we may divide both sides of (\ref{1202b}) by it to obtain the ratio estimate (\ref{375}).
$~~\Box$

%
%
%
%

\section{ Proof Sketch for Theorem \ref{th52}}

 Theorem~\ref{th52} is established in \cite{LS09}
using  the Hardy-Littlewood method (\cite{HL23}, \cite{HL24}), in the modern form using
finite exponential sums, see Vaughan \cite{Va97}.
We introduce the  {\em weighted exponential sum}
\[
E(x, y; \alpha) := \sum_{n \in \sS(y)} e(n \alpha) \Phi(\frac{n}{x}),
\]
where $e(x) := e^{2\pi i x}$. We have the identity
\begin{equation}\label{306}
N(x, y; \Phi)= 
\int_{0}^{1} E(x,y; \alpha)^2 E(x,y; - \alpha) d \alpha,
\end{equation}
because in multiplying out the exponential sums in the integral,
only terms $(n_1, n_2, n_3)$ with $n_1+n_2-n_3=0$ contribute.
The Hardy-Littlewood method  estimates the integral on the
right side of (\ref{306}) by splitting the integrand into
small arcs centered around rational points with small
denominators, and adding up the contributions of
the arcs.  The major contribution will come from those parts of the
circle very 
near points $\frac{a}{q}$ with small denominator, the {\em major arcs}.
The remainder of the circle comprises the {\em minor arcs}. Our choice
of major arcs and minor arcs  is given below. \\

%
%
%
%

\subsection{General bound for $E(x, y, \alpha)$}

To estimate the integral (\ref{306}) we  wish to determine
how the function $E(x,y, \alpha)$ behaves for $\alpha$
near a rational number $\frac{a}{q}$ in lowest terms, and we write 
$\alpha= \frac{a}{q}+ \gamma.$ The main  estimate for these
is given by \cite[Theorem 2.3]{LS09}.

\begin{theorem}\label{thm51}
Assume the truth of the GRH.  Let $\delta>0$ be any 
fixed real number.  Let $x$ and $y$ be 
large with $(\log x)^{2+\delta} \le y\le \exp((\log x)^{1/2 - \delta})$, and let 
$\kappa$ be defined by $y=(\log x)^{\kappa}$.  Let $\alpha\in [0,1]$ be a 
real number with $\alpha=a/q+\gamma$ where $q\le \sqrt{x}$, $(a,q)=1$, and 
$|\gamma|\le 1/(q\sqrt{x})$. 

(1)  If $|\gamma|\ge x^{\delta-1}$ then for any fixed $\epsilon>0$,  we have 
\[
E(x,y;\alpha) \ll x^{\frac 34+\epsilon}. 
\] 

(2) If $|\gamma|\le x^{\delta-1}$ then on writing $q=q_0q_1$ with 
$q_0 \in {\sS(y)}$ and all prime factors of $q_1$ being bigger than $y$, and writing ${c_0}=1-1/\kappa$, 
then for any fixed $\epsilon>0$ we have
\begin{eqnarray*}
E(x,y;\alpha)&=&\frac{\mu(q_1)}{\phi(q_1)} 
\frac{c_0}{q_0^{c_0}}\prod_{p|q_0} \Big(1- \frac{p^{c_0}-1}{p-1}\Big) 
\Big(\int_0^{\infty} 
\Phi(w) e(\gamma xw) w^{c_0} \frac{dw}{w}\Big) \Psi(x,y) \\
&&~~+ ~O\Big(x^{\frac 34+\epsilon}\Big) 
+ O\Big( \frac{\Psi(x,y)q_0^{-c_0+\epsilon} q_1^{-1+\epsilon}}{(1+|\gamma|x)^2} \frac{(\log \log y)}{\log y} \Big). \\
\end{eqnarray*}
\end{theorem}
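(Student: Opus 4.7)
The approach is to combine a Mellin--Perron setup for the weight $\Phi$ with the Hildebrand--Tenenbaum saddle-point method for $y$-smooth numbers, and to handle the additive character $e(na/q)$ through a CRT decomposition plus multiplicative characters. Write $q = q_0 q_1$ as in the statement, choose CRT inverses $\bar q_0 \in (\ZZ/q_1\ZZ)^\times$ and $\bar q_1 \in (\ZZ/q_0\ZZ)^\times$, and split $e(na/q) = e(na\bar q_1/q_0)\, e(na\bar q_0/q_1)$. Since every $n \in \sS(y)$ is automatically coprime to $q_1$, the $q_1$-factor is a nontrivial additive character on $(\ZZ/q_1\ZZ)^\times$. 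Expanding via Gauss sums, $e(na\bar q_0/q_1) = \phi(q_1)^{-1} \sum_{\chi \bmod q_1} \bar\chi(a\bar q_0)\tau(\chi)\bar\chi(n)$, the principal-character term produces the factor $\mu(q_1)/\phi(q_1)$ (since $\tau(\chi_0) = \mu(q_1)$), while all nonprincipal characters contribute $O(x^{3/4+\epsilon})$ to the final $E(x,y;\alpha)$ after Mellin inversion and a contour shift; GRH enters here through the bound $|L(s,\chi)| \ll (q(|t|+2))^\epsilon$ on lines just right of $\mathrm{Re}(s) = 1/2$, controlling the contour-shift errors uniformly for $q \le \sqrt x$.

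The $q_0$-piece is handled by a finer multiplicative analysis since $q_0 \in \sS(y)$. Write $n = n'm$ with $n'$ supported on the prime divisors of $q_0$ and $\gcd(m, q_0) = 1$; both $n'$ and $m$ remain $y$-smooth. The additive phase $e(n'ma\bar q_1/q_0)$ decomposes prime-by-prime via CRT on the prime-power factorization of $q_0$, and a careful local computation---summing geometric series over the exponents of each $p \mid q_0$ and collecting factors---produces $q_0^{-s}\prod_{p \mid q_0}\bigl(1 - (p^s-1)/(p-1)\bigr)$ as the local contribution at parameter $s$. After Mellin inversion $\Phi(n/x) = (2\pi i)^{-1} \int \tilde\Phi(s) x^s n^{-s}\, ds$, the remaining coprime-smooth sum is treated by the Hildebrand--Tenenbaum saddle-point method: the saddle for $\zeta(s; y) := \sum_{n \in \sS(y)} n^{-s}$ sits at $s = c_0 := 1 - 1/\kappa$ in the regime $y = (\log x)^\kappa$, and localizing there yields $\Psi(x, y)$ together with the prefactor $c_0$, while the $\Phi$-Mellin transform along $\mathrm{Re}(s) = c_0$ produces the integral $\int_0^\infty \Phi(w) e(\gamma x w) w^{c_0}\, dw/w$.

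For the minor-arc regime $|\gamma| \ge x^{\delta - 1}$ of part~(1), repeated integration by parts in the $w$-integral gives decay of order $(1 + |\gamma|x)^{-N}$ for any $N$, so the putative main term is negligible and only the $O(x^{3/4+\epsilon})$ error from nonprincipal characters survives. The main obstacle I anticipate is uniformity: the saddle-point analysis, the character expansion modulo $q_1$, and the multiplicative manipulation modulo $q_0$ must remain valid simultaneously with tight error terms uniformly for $q \le \sqrt x$, and the stated error $q_0^{-c_0+\epsilon} q_1^{-1+\epsilon}(1+|\gamma|x)^{-2}(\log\log y)/\log y$ requires correctly tracking the $q_0$- and $q_1$-dependence through every step. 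GRH is indispensable both in bounding the nonprincipal character contributions and, implicitly through the Hildebrand--Tenenbaum machinery, in controlling $\Psi(x, y)$ at the scale $y = (\log x)^\kappa$; without it neither the $x^{3/4+\epsilon}$ bound nor the saddle-point remainder would stay within the stated tolerances.
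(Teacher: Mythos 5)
Your proposal follows essentially the same skeleton as the paper's proof: a Mellin/contour-integral setup for $\Phi$, a character-sum expansion of the additive exponential (split via the factorization $q=q_0q_1$), GRH-controlled contour shifts for the non-principal characters, and the Hildebrand--Tenenbaum saddle-point method applied to the remaining smooth-sum. The identification $\tau(\chi_0 \bmod q_1)=\mu(q_1)$ giving the factor $\mu(q_1)/\phi(q_1)$ and the local factor $q_0^{-s}\prod_{p\mid q_0}\bigl(1-\frac{p^s-1}{p-1}\bigr)$ are both correctly recovered.

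There is, however, one point that is more than an imprecision. You write that ``GRH enters here through the bound $|L(s,\chi)|\ll (q(|t|+2))^\epsilon$,'' invoking the usual Lindel\"of-on-GRH estimate for the \emph{full} Dirichlet $L$-function. But after the Mellin inversion the objects that occur on the contour are the \emph{truncated Euler products} $L(s,\chi;y)=\prod_{p\le y}(1-\chi(p)p^{-s})^{-1}=\sum_{n\in\sS(y)}\chi(n)n^{-s}$ and $\zeta(s;y)$, since the $n$-sum runs only over $y$-smooth integers. The Lindel\"of bound on $L(s,\chi)$ does not transfer to $L(s,\chi;y)$ without work: one needs a separate GRH-based argument, carried out via the explicit formula for $\sum_{n\le y}\Lambda(n)\chi(n)n^{-it}$ followed by partial summation, to obtain $|L(s,\chi;y)|\ll_\epsilon (q|s|)^\epsilon$ for $\mathrm{Re}(s)\ge\tfrac12+\epsilon$ (with a slightly different bound, involving $\exp(y^{1-\sigma}/((1+|t|)\log y))$, for the principal character / $\zeta(s;y)$). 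This is Proposition~\ref{le43} in the paper and it is the real technical input; your argument as written would break at the contour shift if only the bound on the untruncated $L$-function were available.

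Two smaller points. First, your treatment of the $q_0$-piece by a ``geometric-series'' local computation produces exactly the principal-character Dirichlet-series factor $H(s;q_0)=q_0^{-s}\prod_{p\mid q_0}\bigl(1-\frac{p^s-1}{p-1}\bigr)$ relative to $\zeta(s;y)$, which is correct, but you still need to recognize that non-principal characters also arise from the $q_0$ side (characters modulo $q_0/d$ for $d\mid q_0$), and to show they too are absorbed into the $O(x^{3/4+\epsilon})$; this is not visible in the sketch. Second, the saddle point for $\zeta(s;y)$ lies at the Hildebrand--Tenenbaum value $c(x,y)$, not exactly at $c_0=1-1/\kappa$; one has $c(x,y)=c_0+O(\log\log y/\log y)$, and the replacement $c\mapsto c_0$ is precisely what produces the $(\log\log y)/\log y$ factor in the stated error term, so you cannot simply localize ``at $c_0$'' and expect the stated error to emerge for free.
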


These bounds are obtained combining the Hildebrand-Tenenbaum saddle point
method  with bounds for partial Euler products, as we now explain. This estimate explicitly
contains $\Psi(x, y)$ in main term, avoiding the problem that behavior of $\Psi(x, y)$ for
very small values of $y$ does not have a convenient simplifying
formula. 
%
%
%
%

\subsection{Bounding partial Euler products}

The Dirichlet series associated to
the set  $\sS(y)$ of all integers having smoothness bound $y$ is given
by the partial Euler product
\[
\zeta(s; y) =  \prod_{p \le y} (1- p^{-s})^{-1} = \sum_{n \in \sS(y)} n^{-s},
\]
associated to the Riemann zeta function; this series converges
absolutely on the half-plane $Re(s)>0$.
We invoke the GRH to control the size of $\zeta(s; y)$
and more generally for  the partial Euler products
associated to primitive Dirichlet $L$-functions,
\[
L(s, \chi; y):= \prod_{p \le y} \Big( 1- \chi(p) p^{-s}\Big)^{-1},
\]
(\cite[Proposition 5.1]{LS09}). 

%
%
\begin{proposition}~\label{le43}  Assume  the truth of the GRH.  Let $\chi \bmod{q}$ be a 
primitive Dirichlet character.       
For any $\epsilon >0$, and $s$ a complex number with Re$(s)=\sigma\ge 1/2+\epsilon$, we 
have  
\[
|L(s, \chi; y)| \ll_{\epsilon} (q|s|)^{\epsilon}.
\]
For the trivial character we have, with $\sigma ={\rm Re}(s)\ge 1/2+\epsilon$,  
\[
|\zeta(s; y)| \ll_{\epsilon} \exp\Big(\frac{y^{1- \sigma}}{(1+|t|)\log y} \Big) |s|^{\epsilon}.
\]
\end{proposition}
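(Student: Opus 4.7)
The plan is to take logarithms of the two partial Euler products and bound the resulting prime sums using GRH-based estimates. First I would expand
\[
\log L(s,\chi;y) \;=\; \sum_{p\le y}\sum_{k\ge 1}\frac{\chi(p)^k}{k\,p^{ks}},
\]
and note that the part with $k\ge 2$ is uniformly bounded, since $\sum_{p}\sum_{k\ge 2} p^{-k\sigma}/k$ converges absolutely whenever $\sigma\ge 1/2+\epsilon$. This reduces everything to estimating the short prime sum $\sum_{p\le y}\chi(p)/p^s$, and likewise $\sum_{p\le y}p^{-s}$ in the trivial case.

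For primitive non-principal $\chi$, the plan is to compare the partial product to the full $L$-function by writing
\[
\log L(s,\chi;y) \;=\; \log L(s,\chi) \;-\; \sum_{p>y}\frac{\chi(p)}{p^s} \;+\; O(1).
\]
Under GRH one has the classical Littlewood-type bound $\log|L(s,\chi)|\ll_\epsilon (\log q|s|)^{1-2\epsilon}$ for $\mathrm{Re}(s)\ge 1/2+\epsilon$ (obtained by applying Borel--Carath\'eodory to the Hadamard product and using that all non-trivial zeros have real part $1/2$), which already yields $|L(s,\chi)|\ll_\epsilon (q|s|)^\epsilon$. The tail sum $\sum_{p>y}\chi(p)/p^s$ is controlled by partial summation using the GRH estimate $|\pi(t,\chi)|\ll \sqrt{t}\log(qt)$, giving a bound of size roughly $(1+|s|/\epsilon)\,y^{1/2-\sigma}\log(qy)$. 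This is $O(1)$ once $y$ is a sufficiently large power of $|s|$. For the opposite regime (\emph{small} $y$ relative to $|s|$), one instead uses the unconditional a priori bound $|L(s,\chi;y)|\le \zeta(\sigma;y)=\exp\bigl(\sum_{p\le y}p^{-\sigma}+O(1)\bigr)$, which for such $y$ is already dominated by $(q|s|)^\epsilon$. Combining the two regimes gives the claimed bound.

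For the trivial character, there is no cancellation in $\sum_{p\le y}p^{-s}$ and the extra factor $\exp(y^{1-\sigma}/((1+|t|)\log y))$ must genuinely appear. I would write $\pi(t)=\mathrm{Li}(t)+E(t)$ with $|E(t)|\ll\sqrt{t}(\log t)^2$ under RH. Partial summation yields
\[
\sum_{p\le y}p^{-s} \;=\; \int_2^y \frac{t^{-s}}{\log t}\,dt \;+\; \int_2^y t^{-s}\,dE(t) \;+\; O(1).
\]
The error integral is $O(\log|s|)$ by integration by parts and absolute convergence (using $\sigma-1/2\ge\epsilon$). For the main integral, one integration by parts gives the leading term $y^{1-s}/((1-s)\log y)$, whose modulus is bounded by $y^{1-\sigma}/((1+|t|)\log y)$ since $|1-s|\gg 1+|t|$, with lower-order contributions absorbed into the same bound. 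Exponentiating produces the stated exponential factor.

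The main obstacle is the non-principal character case, where one has to juggle the comparison to the full $L$-function against the estimate of the tail over $p>y$, splitting into the regimes where $y$ is large or small relative to $|s|$ and patching the two bounds together uniformly in $y$. The Littlewood-type bound on $\log L(s,\chi)$ under GRH is classical but intricate, and the tail estimate relies on careful partial summation. By contrast, the trivial-character case is technically cleaner, the only subtle point being tracking the $(1+|t|)^{-1}$ saving in the integration by parts, which arises from the oscillation of $t^{-it}$ through the prefactor $1/(1-s)$.
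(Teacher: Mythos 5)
There is a genuine gap, and it is the same issue in both the non-principal and the trivial cases: you partial-sum against $\pi(t,\chi)$ (resp.\ $E(t)=\pi(t)-\operatorname{Li}(t)$), which does not carry the oscillating factor $t^{-it}$, and the resulting boundary-plus-integral term $s\int t^{-s-1}\pi(t,\chi)\,dt$ forces an extraneous factor of $|s|$ into the estimate. In the trivial case this makes your claim that the error integral is ``$O(\log|s|)$ by integration by parts and absolute convergence'' simply false: absolute convergence, with $|E(t)|\ll\sqrt t(\log t)^2$ under RH, gives $\bigl|s\int_2^y E(t)t^{-s-1}\,dt\bigr|\ll |s|\int_2^\infty t^{-1/2-\sigma}(\log t)^2\,dt\ll |s|/\epsilon^3$, which is linear in $|s|$, not logarithmic. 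To recover $O(\log|s|)$-type savings you must exploit cancellation in $\int E(t)\,t^{-it}\,t^{-\sigma-1}\,dt$, and ``absolute convergence'' throws that cancellation away. In the non-principal case you are aware of the $|s|$ factor and try to patch with a two-regime split, but the regimes do not meet: taking $\sigma=1/2+\epsilon$, the a priori bound $|L(s,\chi;y)|\le\zeta(\sigma;y)=\exp\bigl(\sum_{p\le y}p^{-\sigma}+O(1)\bigr)\asymp\exp\bigl(y^{1/2-\epsilon}/\log y\bigr)$ is $\ll(q|s|)^\epsilon$ only for $y\lesssim(\log q|s|)^{2+o(1)}$, whereas the tail bound $(1+|s|/\epsilon)\,y^{1/2-\sigma}\log(qy)=O(1)$ requires $y\gtrsim(q|s|/\epsilon)^{C/\epsilon}$. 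For $y$ in between (e.g.\ $y\asymp|s|$) neither estimate gives $\ll(q|s|)^\epsilon$.

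The fix is to do what the paper does: estimate the \emph{twisted} Chebyshev sum $\sum_{n\le u}\Lambda(n)\chi(n)n^{-it}$ directly via the explicit formula under GRH, so that the $n^{-it}$ oscillation is built into the summatory function; partial summation against $u^{-\sigma}$ (and then against $1/\log u$) only picks up a bounded factor $\sigma$ rather than $|s|$, and the resulting bound is uniform in $y$ with no regime split needed. Your reduction of the $k\ge2$ terms, the decomposition $\log L(s,\chi;y)=\log L(s,\chi)-\sum_{p>y}\chi(p)/p^s+O(1)$, and the Littlewood-type bound on $\log L(s,\chi)$ under GRH are all fine in themselves; it is the tail/error estimate by untwisted partial summation that fails.
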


This result is analogous to a Lindel\"{o}f hypothesis bound.
It is proved by using the GRH and the ``explicit formula'' techniques 
of prime number theory (see Davenport \cite[Chaps. 17 and 19]{DM80})
to estimate $\sum_{n\le y} \Lambda(n)\chi(n) n^{-it}$,
followed by partial summation to estimate $\log |L(\sigma+it, \chi; y)|$.   
%
%
%
%

\subsection{Estimating the weighted exponential sum}

We obtain the following estimates for the exponential sum
at a value $\alpha= \frac{a}{q} + \gamma$ near a rational point (\cite[Proposition 6.1]{LS09}).
%
%
%
\begin{proposition} \label{pr61}  Assume the truth of the GRH.  Let $\alpha$ be a 
real number in $[0,1]$ and write $\alpha =a/q + \gamma$ with $(a,q)=1$, 
$q\le \sqrt{x}$, and $|\gamma| \le 1/(q\sqrt{x})$.   Then 
\[
E(x,y;\alpha) = M(x,y;q,\gamma) + O(x^{\frac{3}{4}+\epsilon}), 
\]
where the ``local main term" $M(x,y;q, \gamma)$ is defined by 
\[
M(x,y; q, \gamma) := \sum_{n \in \sS(y)} 
\frac{\mu(\frac{q}{(q,n)})}{\phi(\frac{q}{(q,n)})}e(n \gamma) \Phi\Big(\frac{n}{x}\Big).
\]
\end{proposition}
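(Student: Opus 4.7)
The plan is to expand $E(x,y;a/q+\gamma)$ as an average over the residue classes of $n \bmod q$, identify $M(x,y;q,\gamma)$ as the contribution of principal Dirichlet characters, and bound the remaining character contributions via Perron's formula together with the GRH-based $L$-function bound of Proposition \ref{le43}.

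First I would decompose $n \in \mathcal{S}(y)$ according to $d = (n,q)$, writing $n = dm$ and $r = q/d$, so that $(m,r)=1$ and $e(na/q) = e(ma/r)$ with $(a,r)=1$. Orthogonality of characters $\bmod\,r$ gives
\[
e(ma/r) = \frac{1}{\phi(r)}\sum_{\chi \bmod r} \tau_r(\bar\chi)\, \bar\chi(a)\, \chi(m), \qquad \tau_r(\psi) := \sum_{(h,r)=1} \psi(h)\, e(h/r).
\]
For the principal character $\chi_0 \bmod r$, one has $\tau_r(\chi_0) = c_r(1) = \mu(r)$, and summing the resulting contribution over $d \mid q$ recovers exactly $M(x,y;q,\gamma)$ because $\mu(r)/\phi(r) = \mu(q/(q,n))/\phi(q/(q,n))$ when $r = q/(q,n)$. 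The remaining contribution to $E(x,y;a/q+\gamma) - M(x,y;q,\gamma)$ is a double sum over divisors $d\mid q$ and non-principal characters $\chi \bmod r$, weighted by $\tau_r(\bar\chi)/\phi(r)$, of twisted smooth-sum expressions
\[
T(d,\chi;\gamma) := \sum_{\substack{m \ge 1 \\ (m,r)=1,\ dm \in \mathcal{S}(y)}} \chi(m)\, e(dm\gamma)\, \Phi(dm/x).
\]

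Next I would evaluate $T(d,\chi;\gamma)$ by Mellin inversion. Writing $\chi$ as induced by a primitive character $\chi^{*} \bmod f$ with $f \mid r$, the Dirichlet series
\[
L_{d}(s,\chi^{*};y) := \sum_{\substack{m \ge 1 \\ (m,r)=1,\ dm \in \mathcal{S}(y)}} \frac{\chi^{*}(m)}{m^{s}}
\]
equals $L(s,\chi^{*};y)$ times a finite Euler factor over primes dividing $dr/f$, hence is uniformly $O(|L(s,\chi^{*};y)|)$ for $\mathrm{Re}(s) \ge \tfrac12 + \epsilon$. Letting $G_{\gamma,d}(t) = \Phi(t)\, e(\gamma x\, t/d)$ with Mellin transform $\widetilde{G}_{\gamma,d}(s)$, we have
\[
T(d,\chi;\gamma) = \frac{1}{2\pi i}\int_{(c)} L_{d}(s,\chi^{*};y)\,\widetilde{G}_{\gamma,d}(s)\,(x/d)^{s}\,ds
\]
for $c > 1$, and the compact support of $\Phi$ away from $0$ together with repeated integration by parts gives $|\widetilde{G}_{\gamma,d}(s)| \ll_{N,\sigma} (1+|s|/(1+|\gamma x|/d))^{-N}$ for every $N$. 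I would shift the contour to $\mathrm{Re}(s) = \tfrac12 + \epsilon$; the integrand has no poles there (since $\chi^{*}$ is non-principal and $L(s,\chi^{*};y)$ is an entire partial Euler product), and Proposition \ref{le43} supplies $|L(s,\chi^{*};y)| \ll (f|s|)^{\epsilon}$, yielding $|T(d,\chi;\gamma)| \ll (x/d)^{1/2+\epsilon} r^{\epsilon}$ uniformly.

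Assembling the pieces: the total non-principal contribution is
\[
\ll \sum_{d\mid q}\frac{1}{\phi(r)}\sum_{\chi \ne \chi_{0} \bmod r} |\tau_r(\bar\chi)| \cdot (x/d)^{1/2+\epsilon} r^{\epsilon}.
\]
Using $|\tau_r(\bar\chi)| \le \sqrt{r}$ and that there are $\phi(r)$ characters $\bmod\,r$, this becomes $\ll \sum_{d\mid q} \sqrt{r}\,(x/d)^{1/2+\epsilon} r^{\epsilon} \ll q^{1/2+\epsilon} x^{1/2+\epsilon} \ll x^{3/4+\epsilon}$, where the last step uses the hypothesis $q \le \sqrt{x}$. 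This gives the claimed estimate $E(x,y;\alpha) = M(x,y;q,\gamma) + O(x^{3/4+\epsilon})$.

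The main obstacle is step two: one needs enough decay of $\widetilde{G}_{\gamma,d}(s)$ to dominate the mild growth of $L(s,\chi^{*};y)$ for imaginary $s$, uniformly in the potentially large oscillation parameter $\gamma x/d$ (which can reach size $\sqrt{x}/q$). This forces splitting the Mellin integral at $|s| \asymp 1 + |\gamma x|/d$: on the tail $|s|$ large one exploits the rapid decay from integration by parts, while on the bounded part one relies on the uniform Lindelöf-type bound of Proposition \ref{le43}. Handling imprimitive $\chi$ carefully (expressing $\tau_r(\bar\chi)$ in terms of $\tau(\bar\chi^{*})$ and controlling the Euler factors relating $L_d(s,\chi^{*};y)$ to $L(s,\chi^{*};y)$) is the second place where one must be careful, but the bookkeeping there is standard.
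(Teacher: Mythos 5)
Your overall architecture is the same as the paper's: expand $E(x,y;a/q+\gamma)$ over residues $d=(n,q)$, use orthogonality of characters modulo $r=q/d$ so the principal character reproduces $M(x,y;q,\gamma)$, and control the non-principal twists via Mellin inversion, a contour shift to $\mathrm{Re}(s)=\tfrac12+\epsilon$, and the GRH Lindel{\"o}f-type bound of Proposition~\ref{le43}. That is the right skeleton, and the identification of the principal-character contribution with the local main term is carried out correctly (up to a harmless $\bar\chi(a)$ vs.\ $\chi(a)$ slip, and a slip in the phase: the Mellin test function should be $\Phi(t)e(\gamma x\, t)$, not $e(\gamma x\, t/d)$).

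There is, however, a genuine gap in the key analytic estimate, and it is precisely at the place you yourself flag as ``the main obstacle.'' You claim $|T(d,\chi;\gamma)|\ll (x/d)^{1/2+\epsilon}r^{\epsilon}$ uniformly, arguing by splitting the shifted contour at $|s|\asymp 1+|\gamma x|$ and invoking Proposition~\ref{le43} on the bounded part. But the bounded part of the contour has length $\asymp 1+|\gamma x|$, which under the hypothesis $|\gamma|\le 1/(q\sqrt x)$ can be as large as $\sqrt x/q$. If one only uses the trivial pointwise bound $|\widetilde G_{\gamma}(s)|\ll 1$ there, the resulting estimate is $|T|\ll (1+|\gamma x|)\,(x/d)^{1/2+\epsilon}x^{\epsilon}$, and after multiplying by the Gauss sum $\sqrt r$ and taking $d=1$ with $q$ small, this can be as large as $x^{1+\epsilon}/\sqrt q$ --- far beyond $x^{3/4+\epsilon}$. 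Proposition~\ref{le43} controls the $L$-function factor, not the test-function factor; the missing ingredient is a nontrivial bound on the Mellin transform $\widetilde G_\gamma$ on the bounded range, e.g.\ via stationary phase (the phase $\gamma x t + \tfrac{u}{2\pi}\log t$ is stationary only for $|u|\asymp |\gamma x|$, where $|\widetilde G_\gamma|\ll (1+|\gamma x|)^{-1/2}$) or a Plancherel/Cauchy--Schwarz mean-value estimate, either of which gives $\int_{|t|\lesssim 1+|\gamma x|}|\widetilde G_\gamma(\sigma+it)|\,dt\ll \sqrt{1+|\gamma x|}$. With that corrected input one finds $|T|\ll \sqrt{1+|\gamma x|}\,(x/d)^{1/2+\epsilon}x^{\epsilon}$, and then $\sqrt r\,|T|\ll \sqrt{q/d}\,\sqrt{1+\sqrt x/(q)}\,(x/d)^{1/2+\epsilon}x^{\epsilon}\ll x^{3/4+\epsilon}$ does hold for all $q\le\sqrt x$. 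So the conclusion $O(x^{3/4+\epsilon})$ is right, but your stated intermediate bound $(x/d)^{1/2+\epsilon}r^\epsilon$ is false in general, and the mechanism you propose for the bounded part does not produce the required saving. You need to make the van der Corput / $L^2$ argument on $\widetilde G_\gamma$ explicit to close the proof.
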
 

This result is proved using an expansion in terms of Dirichlet characters
\[
E(x, y;  \alpha)=
\sum_{{d|q}\atop{d \in \sS(y)}} \frac{1}{\phi(q/d)} 
\sum_{\chi\bmod{q/d}} \chi(a) \tau(\bar{\chi}) 
\sum_{m \in \sS(y)} e(md \gamma) \chi(m)
\Phi\Big(\frac{md}{x}\Big).
\]
The contribution of the principal characters to this sum gives the 
``local main term" above. 
The contribution of each non-principal character is 
shown to be bounded by 
\[
\frac {\sqrt q}{\sqrt d}\sum_{m \in \sS(y)} e(md \gamma) \chi(m) 
\Phi\Big(\frac{md}{x}\Big) \ll x^{\frac 34+\epsilon},
\]
This  is established using Proposition \ref{le43} for 
primitive characters, with an additional observation to handle
imprimitive characters.

%
%
%
%

\subsection{Estimating the ``local main terms"}

To study the ``local main term" we first observe that 
we may uniquely factor any integer
 $q=q_0q_1$ where $q_0$ is divisible only by primes at most $y$, and $q_1$ is divisible 
only by primes larger than $y$. Then we have the identity
\begin{equation}\label{621a}
M(x,y;q,\gamma) = \frac{\mu(q_1)}{\phi(q_1)} M(x,y;q_0,\gamma).
\end{equation}
Thus we reduce to studying the local main term $M(x,y;q_0,\gamma)$
with $q_0 \in \sS(y)$. Here 
  use the Hildebrand-Tenebaum
saddle-point method, expressing it as a contour integral. 
We have, for $\sigma=Re(s) >1$, 
\begin{equation}\label{561a}
M(x,y;q_0,\gamma) =  \frac{1}{2\pi i} \int_{\sigma-i\infty}^{\sigma+i\infty} 
\zeta(s;y)H(s;q_0) x^s {\check \Phi}(s,\gamma x) ds.  
\end{equation}
in which 
\[
H(s;q_0) = q_0^{-s} \prod_{p|q_0} (1- \frac{p^s-1}{p-1}),
\]
and
\[
\check{\Phi}( s, \lambda)
:= \int_{0}^{\infty} \Phi(w)e(\lambda w) w^{s-1}dw.
\]
The Hilbert-Tenenbaum saddle point method applied to $\zeta(s; y)$  deforms the contour (\ref{561a})
 to approximate near the real axis  the vertical line $Re(s)= c$
where $c= c(x,y)$ is the {\em Hildebrand-Tenenbaum saddle point value}. This is
defined to be the
unique positive solution of the equation
\[
\sum_{p \le y} \frac{\log p}{p^c -1} = \log x.
\]
The root is unique because the function
\[
g(c;y) := \sum_{p \le y} \frac{\log p}{p^c-1}
\]
is strictly decreasing for $c>0$, with limit  $+\infty$ as $c \to 0^{+}$
and limit $0$ as $c \to \infty$.  We use this saddle-point value in the integral (\ref{561a}) and
obtain  \cite[Proposition 6.2]{LS09}.
%
%
\begin{proposition}\label{pr62}  Let $x$ and $y$ be large, and assume that $(\log x)^{2+\delta} \le y\le 
\exp((\log x)^{1/2 - \delta})$. Let $c=c(x,y)$ denote the 
 Hildebrand-Tenenbaum saddle point value.
Suppose $q_0 \in \sS(y)$
and let $\gamma$ be real with $|\gamma| \le 1/(q_0\sqrt{x})$,  
and let $M(x,y;q_0,\gamma)$ be as in Proposition \ref{pr61}.   
Then we have:

(1) If $|\gamma| \ge x^{\delta -1}$ then for any fixed $\epsilon>0$, we have
\[
|M(x,y;q_0, \gamma)| \ll x^{\frac{3}{4}+ \epsilon}q_0^{-\frac 34+\epsilon}.
\] 

(2) If $|\gamma| \le x^{\delta-1}$  then for any fixed $\epsilon>0$, we have 
\begin{eqnarray*}
 M(x,y;q_0,\gamma)& =&   \frac{1}{q_0^c} \prod_{p|q_0} 
\Big( 1-\frac{p^c-1}{p-1}\Big) (c{\check \Phi}(c,\gamma x)) \Psi(x,y) + O_{\epsilon}(x^{\frac 34+\epsilon}q_0^{-\frac 34+\epsilon})\\
&&\hskip 1 in + O_{\epsilon}
\Big(\frac{\Psi(x,y) q_0^{-c+\epsilon}}{(\log y)(1+|\gamma| x)^2}\Big).
\end{eqnarray*}
\end{proposition}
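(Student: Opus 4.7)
The plan is to apply the Hildebrand--Tenenbaum saddle-point method to the integral representation~(\ref{561a}), starting from the line $Re(s)=\sigma>1$ (where the defining Dirichlet series for $\zeta(s;y)H(s;q_0)$ converges absolutely) and shifting into the critical region. All three factors of the integrand extend holomorphically to the half-plane $Re(s)>0$: the partial zeta function $\zeta(s;y)$ is a finite Euler product, $H(s;q_0)$ is a finite sum of exponentials $p^s$, and $\check\Phi(s,\gamma x)$ is entire because $\Phi\in C_c^\infty(\RR_{>0})$. Repeated integration by parts in the defining integral of $\check\Phi$ yields the uniform bound $|\check\Phi(c+it,\gamma x)|\ll_{N,\Phi} (1+|t|+|\gamma x|)^{-N}$, which justifies horizontal contour shifts and ensures absolute convergence on any vertical line.

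For case~(1), where $|\gamma x|\ge x^{\delta}$, I would stay on the saddle line $Re(s)=c$ and extract the bound purely from the decay of $\check\Phi$ in $|\gamma x|$. Combining the pointwise bound $|\zeta(c+it;y)|\le \zeta(c;y)$ (from the Euler product), the estimate $|H(c+it;q_0)|\ll q_0^{-c+\epsilon}$ (from $|q_0^{-s}|=q_0^{-c}$ and $q_0^{o(1)}$ control on the product over $p\mid q_0$), and $\int|\check\Phi(c+it,\gamma x)|\,dt\ll_N x^{-\delta(N-1)}$, with the crude size $x^c\zeta(c;y)\ll x(\log x)^{O(1)}$ and the fact that $q_0\le\sqrt{x}$ (so $q_0^{-c+\epsilon}$ can be freely traded for $q_0^{-3/4+\epsilon}$ at the price of a bounded power of $x$), a sufficiently large choice of $N$ yields $|M(x,y;q_0,\gamma)|\ll x^{3/4+\epsilon}q_0^{-3/4+\epsilon}$ with considerable slack.

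For case~(2), where $|\gamma x|\le x^{\delta}$, shift to $Re(s)=c$ and split at $|t|=T_0$ for some threshold $T_0$ slightly larger than the Gaussian peak scale $1/\sqrt{\sigma_2(c)}$, where $\sigma_2(c) := \tfrac{d^2}{ds^2}\log\zeta(s;y)\big|_{s=c}$. On the inner range $|t|\le T_0$, apply the Hildebrand--Tenenbaum Taylor expansion
\[
\log\zeta(c+it;y) = \log\zeta(c;y) - \tfrac{1}{2}\sigma_2(c)\,t^2 + O\bigl(\sigma_3(c)|t|^3\bigr)
\]
together with first-order Taylor expansions of $H(\cdot;q_0)$ and $\check\Phi(\cdot,\gamma x)$ about $s=c$, and evaluate the resulting Gaussian integral. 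The leading contribution is $\zeta(c;y)x^c H(c;q_0)\check\Phi(c,\gamma x)/\sqrt{2\pi\sigma_2(c)}$, which equals $H(c;q_0)\,c\check\Phi(c,\gamma x)\,\Psi(x,y)(1+o(1))$ by the Hildebrand--Tenenbaum formula $\Psi(x,y) = \zeta(c;y)x^c/(c\sqrt{2\pi\sigma_2(c)})(1+o(1))$, matching the claimed main term. On the outer range $|t|>T_0$, shift this part of the contour rightward and apply either the GRH-based bound $|\zeta(\sigma+it;y)|\ll|s|^\epsilon$ from Proposition~\ref{le43} or the $\check\Phi$ decay to obtain the coarser error $O(x^{3/4+\epsilon}q_0^{-3/4+\epsilon})$, as in case~(1).

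The hard part is extracting the refined secondary error $O\bigl(\Psi(x,y)q_0^{-c+\epsilon}/((\log y)(1+|\gamma|x)^2)\bigr)$. The $(\log y)^{-1}$ saving encodes the first sub-leading correction $\sigma_3/\sigma_2^{3/2}\asymp 1/\log y$ to the saddle-point asymptotic, arising from the cubic Taylor term in $\log\zeta(c+it;y)$ and from the first-derivative corrections of $H(\cdot;q_0)$ and $\check\Phi(\cdot,\gamma x)$ at $s=c$; these must be combined so that odd contributions cancel and the surviving pieces are genuinely of second order. The $(1+|\gamma|x)^{-2}$ factor is obtained by performing two additional integrations by parts in the defining integral of $\check\Phi$ against the oscillation $e(\gamma x w)$, transferring decay in $\gamma x$ from the main term onto this error. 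Maintaining all these bounds uniformly for $q_0\in\sS(y)$ over the range $(\log x)^{2+\delta}\le y\le \exp((\log x)^{1/2-\delta})$, and ensuring that the matching at $|t|=T_0$ between saddle-point asymptotics and the contour-shift bound is consistent, is where the technical delicacy lies.
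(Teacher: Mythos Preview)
Your overall framework matches the paper: express $M(x,y;q_0,\gamma)$ as the contour integral~(\ref{561a}) and analyze it on the saddle line $\mathrm{Re}(s)=c$ via the Hildebrand--Tenenbaum method. The saddle-point expansion you sketch for case~(2) near $t=0$ is also essentially what is done.

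There is, however, a genuine gap in your case~(1) argument. The claimed uniform bound $|\check\Phi(c+it,\gamma x)|\ll_N (1+|t|+|\gamma x|)^{-N}$ is false. Writing
\[
\check\Phi(c+it,\lambda)=\int \Phi(w)\,w^{c-1}\,e^{i(2\pi\lambda w+t\log w)}\,dw,
\]
the phase has a stationary point $w_0=-t/(2\pi\lambda)$, which lies in $\mathrm{supp}\,\Phi$ whenever $t$ and $\lambda$ have opposite sign with $|t|\asymp|\lambda|$. On that range of $t$ (an interval of length $\asymp|\gamma x|$), stationary phase gives only $|\check\Phi(c+it,\gamma x)|\asymp|\gamma x|^{-1/2}$, so in fact $\int_{\RR}|\check\Phi(c+it,\gamma x)|\,dt\asymp|\gamma x|^{1/2}$ grows rather than decays. (Equivalently: under $u=\log w$, $\check\Phi(c+it,\lambda)$ is the Fourier transform in $u\mapsto t$ of $\Phi(e^u)e(\lambda e^u)e^{cu}$, whose $L^2(du)$-norm is independent of $\lambda$.) Pairing this with the trivial bound $|\zeta(c+it;y)|\le\zeta(c;y)$, as you propose, therefore cannot produce the stated $x^{3/4+\epsilon}$ bound.

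What is actually needed in case~(1), and likewise for the outer range $|t|>T_0$ in case~(2), is to exploit the decay of $\zeta(s;y)$ in $|t|$ on the stationary-phase window $|t|\asymp|\gamma x|\ge x^{\delta}$; this is precisely where the trivial-character bound of Proposition~\ref{le43} enters the analysis of $M$. Your integration-by-parts bound on $\check\Phi$ is valid only \emph{away} from this window---for $|t|$ below a fixed small multiple of $|\gamma x|$ it does give $\ll_N|\gamma x|^{-N}$ and can absorb the size of $\zeta(c;y)$. The interplay between these two regimes (non-stationary phase for $|t|\ll|\gamma x|$, $\zeta$-decay for $|t|\asymp|\gamma x|$) is the substance of the argument, and your proposal misses it.
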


We have the formula, valid for $\kappa\ge 1+ \delta,$
\begin{equation}\label{642}
c(x,y)= 1-\frac{1}{\kappa} + O_{\delta}\left( \frac{\log \log y}{\log y}\right),
\end{equation}
deducible from \cite[Theorem 2]{HT86}. This estimate
 is used in replacing $c$ by $c_0$ in Theorem \ref{thm51} above.

%
%
%
%

\subsection{Hardy-Littlewood method estimates}

For parameter values $(x,y)$ we dissect the unit interval in
the integral (\ref{306}) into Farey arcs 
depending only on the parameter $x$. These are enumerated by
rational numbers in the Farey sequence $\sF(Q)$
of order $Q= x^{\frac{1}{2}}$. Here
$\sF( x^{\frac{1}{2}}) := 
\{ \frac{a}{q}: (a, q)=1 ~~\mbox{and}~~ 1 \le q \le  x^{\frac{1}{2}} \}.$
The Farey interval assigned to $\frac{a}{q}$ is the arc
between it and the mediant $\frac{a+a'}{q+q'}$ of its left
neighbor $\frac{a'}{q'}< \frac{a}{q}$, and the same for its right neighbor;
these intervals partition the unit interval.
We extract from  some of  these Farey intervals  the {\em major arcs}.
The  definition 
of the major arcs depends  on 
 an initially specified  {\em  cutoff parameter}  $\delta$ 
satisfying  $0< \delta \le \frac{1}{4}.$ 
Decreasing this parameter makes the major arcs smaller.
We will eventually let $\delta$ become arbitrarily small.
The set of {\em major arcs} $\fM$ consists of an interval associated
with each $\frac{a}{q}$ with $1 \le q \le x^{\frac{1}{4}}$, 
        which consists of that
       part of the Farey interval of $\frac{a}{q}$ satisfying
\[ 
\fM(\frac{a}{q}) = \Big\{ \alpha:~|\alpha- \frac{a}{q}| \le x^{\delta -1}\Big\}.
\]
The set of {\em minor arcs} $\fm$ consist of   the rest of the
interval $[0,1]$ not covered by the major arcs.

We do not use all of the Farey intervals in $\sF(x^{\frac{1}{2}})$ as major
arcs, but we carry out some intermediate estimates for all such intervals, for
possible later uses.

The estimates of Proposition \ref{pr61}(1) and Proposition \ref{pr62}(1) with
(\ref{621a})  lead to a bound $O_{\epsilon}\Big( x^{\frac{3}{4} + \epsilon}\Psi(x,y)\Big)$
for the minor arcs contribution.
Next Proposition~\ref{pr62}(2) is used to simplify the integral over the major arcs to 
\begin{eqnarray*}N(x, y; \Phi) & \approx &
\sum_{{1 \le q_0 \le x^{1/4}}\atop{q_0 \in \sS(y)} } \sum_{{a=1}\atop{(a, q)=1}}^q
\int_{-x^{\delta-1}}^{x^{\delta-1}} E(x, y; q_0, \frac{a}{q} + \gamma)^2 M(x,y; q_0; -\frac{a}{q} -\gamma) d\gamma,  \\
&  \approx   & \sum_{{1 \le q_0 \le x^{1/4}}\atop{q_0 \in \sS(y)} } 
\phi(q) \int_{-x^{\delta-1}}^{x^{\delta-1}} M(x, y; q_0, \gamma)^2 M(x,y; q_0; -\gamma) d\gamma,
\end{eqnarray*}
on noting that all  fractions  $\frac{a}{q}$ contribute identical main terms to
the latter integral, 
with the  minor arcs estimates used to control the remainder terms.

We then estimate $M(x, y; q , \gamma)$ in the formula above using Proposition \ref{pr62}(2).
It remains to integrate out the
$\gamma$-variable in 
this formula over the major arcs, which produces the archimedean singular series
contribution $\fS_{\infty}(c, \Phi)$ appearing in the final answer in Theorem~\ref{th52},
and the sum over $q=q_0q_1$ in the major arcs terms in (\ref{621a})
, which is multiplicative, produces the 
non-archimedean singular series contribution $\fS_{f}(c, y)$ appearing in
Theorem \ref{th52}. 
The difference between the contribution of  $q_0 \in \sS(y)$ and that of 
$q_1$ divisible only by primes greater than $y$
 in (\ref{621a}) accounts for the difference in Euler product
factors for $p \le y$ and $p > y$ in the formula (\ref{133}) for  $\fS_{f}^{*}(y, c)$.
Finally the estimate (\ref{642}) allows the replacement of $c$
by $1-\frac{1}{\kappa}$ in the two singular series, to prove Theorem ~\ref{th52}. 

Finally we note that
the  inclusion-exclusion
argument that produces Theorem ~\ref{th121}
leads to the ``primitive non-archimedean singular series"
${\fS}_{f}^{\ast}(1-\frac{1}{\kappa}, y)$.

%
%
%
%
\section{Concluding Remarks}

We discuss various complementary questions and related problems.\\

(1) {\em Coefficients and  side congruence constraints.}
The  Hardy-Littlewood method approach for the upper bound 
should apply  to  other  linear  additive
problems involving smooth numbers.  One can certainly treat smooth solutions  of
homogeneous linear ternary Diophantine equations having arbitrary coefficients $(a, b, c)$, i.e.
$$
aX +bY+cZ=0.
$$
The singular series will need to be  modified appropriately.
One could also impose congruence side conditions, on the allowable prime factors. For 
example, one could consider  smooth solutions with 
all prime factors
$p \equiv 1~(\bmod ~4)$. In this situation there may occur local 
congruence obstructions to existence of solutions.\\

(2) {\em Additional variables.}
One can also treat by this method smooth solutions to 
linear homogeneous Diophantine equations
in more variables, i.e. 
$X_1 +X_2 + \cdots + X_n=0,$
for $n \ge 4$. Here it is natural to restrict to primitive solutions which also have
the non-degeneracy property that no sum of a proper subset of the variables
vanishes. \\

The heuristic given in Section 1.2 is easily modified to show
 that the cutoff smoothness bound
for infinitely many non-degenerate primitive solutions should be
$$
S(X_1, X_2, ..., X_n) \ll \left(\log H(X_1, ..., X_n)\right)^{1+ \frac{1}{n-1} + \epsilon}.
$$
Here we can again add coefficients $\sum a_j X_j$ and put congruence restrictions
on the allowed prime divisors of the $X_i$.\\

(3) {\em  Binary additive variant.}
Consider the (inhomogeneous) binary equation
\begin{equation}\label{501}
X+Y=1.
\end{equation}
Here the relative primality of solutions is built in to the equation.
A similar heuristic to that in \S 1.2  applies to 
indicate there should be  a threshold value $\kappa_0^{\ast}$ such that there are
infinitely many  solutions $(X, 1, X+1)$ having 
smoothness
$S(X, 1, X+1) \ll (\log X)^{\kappa_0^{\ast}+ \epsilon}$, but only finitely
many solutions having $S(X, 1, X+1) \ll (\log X)^{\kappa_0^{\ast} - \epsilon}$.
This heuristic predicts that  $\kappa_0^{\ast} =2$.
The examples in \S 1.3 include   several ``unusually good" triples with
$\kappa_{0}( X, 1, X+1) < 2.$ 
However  obtaining lower bounds  for the number of solutions to (\ref{501}), even
conditionally under GRH,  seems out of reach.
The circle method is unable to effectively control minor arc estimates for binary problems.\\

(4) {\em Extension to algebraic number fields.}
The $\ABC$-conjecture has been generalized to number fields.
For the latest results in this direction see Masser \cite{Ma02},
Gy\H{o}ry and Yu \cite{GY06}
and Gy\H{o}ry \cite{Gy08}.
One might formulate  an extension of  the $\XYZ$-conjecture to algebraic number fields, considering
triples $(X, Y, Z)$ of algebraic integers with $X+Y+Z=0$.
In this case the smoothness bound will involve the (absolute) norms of  prime ideals dividing the
algebraic integers in the equation, and should be scaled to be independent of the
field of definition of the equation. \\

(5) {\em Reverse implications for zeros of $L$-functions.}
It is known that a uniform generalization  of the $\ABC$-conjecture for number fields has  implications concerning location of zeros of $L$-functions,
at least concerning nonexistence of ``Siegel zeros"
cf. Granville and Stark \cite{GS00}. Their results are based on analyzing
algebraic integer solutions to modular equations, cf. (\ref{108a}).
Since the lower bound for the $\XYZ$-conjecture is
a weaker implication than the $\ABC$-conjecture, one may ask whether a suitable
uniform $\XYZ$-conjecture for number fields is sufficient to imply  nonexistence of
Siegel zeros, by a similar approach. \\

(6) {\em Function field case}. Let $K$ be a function field of one variable over
a suitable non-algebraically closeld $k$, e.g $k = \QQ$ or $k= \FF_q$.
 In this context the $\ABC$-conjecture is an unconditional theorem, and the GRH 
 is unconditional over finite fields.
 It seems plausible that some analogues of the $\XYZ$ conjecture
 might be true and provable unconditionally in this context.

%
%
%
%

\end{document}